\begin{document}

\newtheorem{theorem}[subsection]{Theorem}
\newtheorem{proposition}[subsection]{Proposition}
\newtheorem{lemma}[subsection]{Lemma}
\newtheorem{corollary}[subsection]{Corollary}
\newtheorem{conjecture}[subsection]{Conjecture}
\newtheorem{prop}[subsection]{Proposition}
\numberwithin{equation}{section}
\newcommand{\mr}{\ensuremath{\mathbb R}}
\newcommand{\mc}{\ensuremath{\mathbb C}}
\newcommand{\dif}{\mathrm{d}}
\newcommand{\intz}{\mathbb{Z}}
\newcommand{\ratq}{\mathbb{Q}}
\newcommand{\natn}{\mathbb{N}}
\newcommand{\comc}{\mathbb{C}}
\newcommand{\rear}{\mathbb{R}}
\newcommand{\prip}{\mathbb{P}}
\newcommand{\uph}{\mathbb{H}}
\newcommand{\fief}{\mathbb{F}}
\newcommand{\majorarc}{\mathfrak{M}}
\newcommand{\minorarc}{\mathfrak{m}}
\newcommand{\sings}{\mathfrak{S}}
\newcommand{\fA}{\ensuremath{\mathfrak A}}
\newcommand{\mn}{\ensuremath{\mathbb N}}
\newcommand{\mq}{\ensuremath{\mathbb Q}}
\newcommand{\half}{\tfrac{1}{2}}
\newcommand{\f}{f\times \chi}
\newcommand{\summ}{\mathop{{\sum}^{\star}}}
\newcommand{\chiq}{\chi \bmod q}
\newcommand{\chidb}{\chi \bmod db}
\newcommand{\chid}{\chi \bmod d}
\newcommand{\sym}{\text{sym}^2}
\newcommand{\hhalf}{\tfrac{1}{2}}
\newcommand{\sumstar}{\sideset{}{^*}\sum}
\newcommand{\sumprime}{\sideset{}{'}\sum}
\newcommand{\sumprimeprime}{\sideset{}{''}\sum}
\newcommand{\sumflat}{\sideset{}{^\flat}\sum}
\newcommand{\shortmod}{\ensuremath{\negthickspace \negthickspace \negthickspace \pmod}}
\newcommand{\V}{V\left(\frac{nm}{q^2}\right)}
\newcommand{\sumi}{\mathop{{\sum}^{\dagger}}}
\newcommand{\mz}{\ensuremath{\mathbb Z}}
\newcommand{\leg}[2]{\left(\frac{#1}{#2}\right)}
\newcommand{\muK}{\mu_{\omega}}
\newcommand{\thalf}{\tfrac12}
\newcommand{\lp}{\left(}
\newcommand{\rp}{\right)}
\newcommand{\Lam}{\Lambda_{[i]}}
\newcommand{\lam}{\lambda}
\def\L{\fracwithdelims}
\def\om{\omega}
\def\pbar{\overline{\psi}}
\def\phis{\phi^*}
\def\lam{\lambda}
\def\lbar{\overline{\lambda}}
\newcommand\Sum{\Cal S}
\def\Lam{\Lambda}
\newcommand{\sumtt}{\underset{(d,2)=1}{{\sum}^*}}
\newcommand{\sumt}{\underset{(d,2)=1}{\sum \nolimits^{*}} \widetilde w\left( \frac dX \right) }

\newcommand{\hf}{\tfrac{1}{2}}
\newcommand{\af}{\mathfrak{a}}
\newcommand{\Wf}{\mathcal{W}}

\theoremstyle{plain}
\newtheorem{conj}{Conjecture}
\newtheorem{remark}[subsection]{Remark}

\makeatletter
\def\widebreve{\mathpalette\wide@breve}
\def\wide@breve#1#2{\sbox\z@{$#1#2$}%
     \mathop{\vbox{\m@th\ialign{##\crcr
\kern0.08em\brevefill#1{0.8\wd\z@}\crcr\noalign{\nointerlineskip}%
                    $\hss#1#2\hss$\crcr}}}\limits}
\def\brevefill#1#2{$\m@th\sbox\tw@{$#1($}%
  \hss\resizebox{#2}{\wd\tw@}{\rotatebox[origin=c]{90}{\upshape(}}\hss$}
\makeatletter

\title[Bounds for moments of Dirichlet $L$-functions to a fixed modulus]{Bounds for moments of Dirichlet $L$-functions to a fixed modulus}

\author{Peng Gao}
\address{School of Mathematical Sciences, Beihang University, Beijing 100191, P. R. China}
\email{penggao@buaa.edu.cn}
\begin{abstract}
 We study the $2k$-th moment of central values of the family of Dirichlet $L$-functions to a fixed prime modulus. We establish sharp lower
 bounds for all real $k \geq 0$ and sharp upper bounds for $k$ in the range $0 \leq k \leq 1$.
\end{abstract}

\maketitle

\noindent {\bf Mathematics Subject Classification (2010)}: 11M06  \newline

\noindent {\bf Keywords}: moments, Dirichlet $L$-functions, lower bounds, upper bounds

\section{Introduction}
\label{sec 1}

  A considerable amount of work in the literature has been done on moments of central values of families of $L$-functions, due to rich
  arithmetic meanings these central values have. In this paper, we focus on the family of Dirichlet $L$-functions to a fixed modulus. It is
  widely believed that (see \cite{R&Sound}) for all real $k  \geq 0$ and large integers $q \not \equiv 2 \pmod 4$ (so that primitive Dirichlet
  characters modulo $q$ exist),
\begin{align}
\label{moments}
 \sumstar_{\substack{ \chi \shortmod q }}|L(\tfrac{1}{2},\chi)|^{2k} \sim C_k \phis(q)(\log q)^{k^2},
\end{align}
  where we denote $\chi$ (respectively, $\phis(q)$) for a Dirichelt character (respectively, the number of primitive characters) modulo $q$,
  the numbers $C_k$ are explicit constants and we denote throughout the paper $\sumstar$ for the sum over primitive Dirichlet characters
  modulo $q$.

  The formula given in \eqref{moments} is well-known for $k=1$ and is a conjecture due to K. Ramachandra \cite{Rama79} for $k=2$ when the sum
  in \eqref{moments} is being replaced by the sum over all Dirichlet characters modulo a prime $q$. For all most all $q$, D. R. Heath-Brown
  \cite{HB81} established \eqref{moments} for $k=2$ and K. Soundararajan \cite{Sound2007} improved the result to be valid for all $q$. An
  asymptotic formula with a power saving error term was further obtained in this case for $q$ being prime numbers by M. P. Young
  \cite{Young2011}. The main terms in Young's result agree with a conjectured formula provided by J. B. Conrey, D. W. Farmer, J. P. Keating,
  M. O. Rubinstein and N. C. Snaith in \cite{CFKRS} concerning the left side of \eqref{moments} for all positive integral values of $k$.
  Subsequent improvements on the error terms in Young's result are given in \cite{BFKMM1} and \cite{BFKMM}. See also \cite{Wu2020} for an
  extension of Young's result to general moduli.

   Other than the asymptotic relations given in \eqref{moments}, much is known on upper and lower bounds of the conjectured order of magnitude
   for moments of the family of $L$-functions under consideration. To give an account for the related results, we assume that $q$ is a prime
   number in the rest of the paper. In \cite{Sound01}, under the assumption of the generalized Riemann hypothesis (GRH), K. Soundararajan
   showed that
$$\sumstar_{\substack{ \chi \shortmod q }}|L(\tfrac{1}{2},\chi)|^{2k} \ll_k \phis(q)(\log q)^{k^2+\varepsilon} $$
  for all real positive $k$ and any $\varepsilon>0$. These bounds are optimal except for the $\varepsilon$ powers. The optimal upper bounds
  are later obtained by D. R. Heath-Brown in \cite{HB2010} unconditionally for $k=1/v$ with $v$ a positive integer and for all $k \in (0, 2)$
  under GRH. Using a sharpening of the method of Soundararajan by A. J. Harper in \cite{Harper}, one may also establish the optimal upper
  bounds for all real $k \geq 0$ under GRH. In \cite{Radziwill&Sound},  M. Radziwi{\l\l} and K. Soundararajan enunciated a principle that
  allows one to establish sharp upper bounds for moments of families of $L$-functions unconditionally and used it to study the moments of
  quadratic twists of $L$-functions attached to elliptic curves. This principle was then applied by W. Heap, M. Radziwi{\l\l} and K.
  Soundararajan in \cite{HRS} to establish unconditionally the $2k$-th moment of the Riemann zeta function on the critical line for all real
  $0 \leq k \leq 2$.

   In the opposite direction, a simple and powerful method developed by Z. Rudnick and K. Soundararajan in \cite{R&Sound1} shows that
$$\sumstar_{\substack{ \chi \shortmod q }}|L(\tfrac{1}{2},\chi)|^{2k} \gg_k \phis(q)(\log q)^{k^2} $$
  for all rational $k \geq 1$. A modification of a method of M. Radziwi{\l\l} and K. Soundararajan in \cite{Radziwill&Sound1}  may allow one
  to establish such lower bounds for all real $k \geq 1$. In \cite{C&L}, V. Chandee and X. Li obtained the above lower bounds for rational
  $0<k<1$.

  In  \cite{H&Sound}, W. Heap and K. Soundararajan developed another principle which allows one to study lower bounds of families of
  $L$-functions. This principle can be regarded as a companion to the above principle of M. Radziwi{\l\l} and K. Soundararajan
  \cite{Radziwill&Sound} concerning upper bounds. Although Heap and Soundararajan only studied moments of the Riemann zeta function on the
  critical line, they did point out that their principle may be applied to study moments of families of $L$-functions, including the one we
  consider in this paper. In fact,  the density conjecture of N. Katz and P. Sarnak concerning low-lying zeros of families of $L$-functions
  indicates that the underlying symmetry for the family of Dirichlet $L$-functions to a fixed modulus is unitary, and that the behaviour of
  this family resembles that of the Riemann zeta function on the critical line. Thus, one expects to obtain sharp lower bounds for moments of
  the above unitary family of $L$-functions using the principle of Heap and Soundararajan. The aim of this paper is to first carry out this
  principle explicitly to achieve the desired lower bounds in the following result.
\begin{theorem}
\label{thmlowerbound}
   For large prime $q$ and any real number $k \geq 0$, we have
\begin{align}
\label{lowerbound}
   \sumstar_{\substack{ \chi \shortmod q }}|L(\tfrac{1}{2},\chi)|^{2k} \gg_k \phis(q)(\log q)^{k^2}.
\end{align}
\end{theorem}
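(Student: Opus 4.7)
The plan is to adapt the lower-bound principle of Heap and Soundararajan \cite{H&Sound}, originally developed for the zeta function on the critical line, to our unitary family $\{L(\hf,\chi) : \chi \shortmod q\}$. The heart of the method is to construct a short Dirichlet polynomial $\mathcal{N}(\chi,k)$ that serves as a proxy for $L(\hf,\chi)^k$ and then, via a H\"older-type inequality, reduce the $2k$-th moment to a twisted first moment of $L$ against $\mathcal{N}$ and certain mean values of $\mathcal{N}$ alone, both of which are accessible through character orthogonality.

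I would first partition the primes below $q^\theta$ (for a small fixed $\theta \in (0,\hf)$) into telescoping intervals $I_j = (\exp((\log q)^{\alpha_{j-1}}),\exp((\log q)^{\alpha_j})]$ along an increasing sequence $0 = \alpha_0 < \alpha_1 < \cdots < \alpha_J = \theta$. For each $j$, form the short prime polynomial $P_j(\chi) = \sum_{p \in I_j} \chi(p)/\sqrt{p}$ and the truncated exponential
\begin{align*}
N_j(\chi,k) = \sum_{r=0}^{l_j} \frac{\bigl(kP_j(\chi)\bigr)^r}{r!},
\end{align*}
with $l_j$ chosen so that the typical size of $(kP_j)^{l_j}/l_j!$ is negligible and the Dirichlet-polynomial length of $N_j$ is much smaller than the start of $I_{j+1}$. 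Then $\mathcal{N}(\chi,k) = \prod_{j=1}^J N_j(\chi,k)$ is a short Dirichlet polynomial of length $q^{o(1)}$ approximating the Euler-product shape $\exp\bigl(k\sum_{p \leq q^\theta} \chi(p)/\sqrt{p}\bigr)\approx L(\hf,\chi)^k$.

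Next comes the H\"older-type step, applied in slightly different forms in the ranges $k\ge 1$ and $0\le k\le 1$. For $k \geq 1$, expanding $|L(\hf,\chi)\mathcal{N}(\chi,k-1) - \mathcal{N}(\chi,k)|^2 \geq 0$ gives the pointwise bound
\begin{align*}
\bigl|L(\hf,\chi)\mathcal{N}(\chi,k-1)\bigr|^2 \geq 2\,\mathrm{Re}\bigl(L(\hf,\chi)\mathcal{N}(\chi,k-1)\overline{\mathcal{N}(\chi,k)}\bigr) - \bigl|\mathcal{N}(\chi,k)\bigr|^2,
\end{align*}
while H\"older with exponents $k$ and $k/(k-1)$ yields
\begin{align*}
\sumstar_{\chi \shortmod q}\bigl|L(\hf,\chi)\mathcal{N}(\chi,k-1)\bigr|^2 \leq \Bigl(\sumstar_{\chi \shortmod q}|L(\hf,\chi)|^{2k}\Bigr)^{1/k}\Bigl(\sumstar_{\chi \shortmod q}|\mathcal{N}(\chi,k-1)|^{2k/(k-1)}\Bigr)^{(k-1)/k}.
\end{align*}
Combining these produces a lower bound for $\sumstar_{\chi \shortmod q}|L(\hf,\chi)|^{2k}$ in terms of the twisted first moment and mean values of $\mathcal{N}$; an analogous setup (with the roles of the polynomials suitably rearranged) handles $0 \leq k \leq 1$.

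The key computation is the twisted first moment $\sumstar_{\chi \shortmod q} L(\hf,\chi)\mathcal{N}(\chi,k-1)\overline{\mathcal{N}(\chi,k)}$. I would replace $L(\hf,\chi)$ by a smooth partial sum of length $\sim\sqrt{q}$ via an approximate functional equation, multiply by $\mathcal{N}(\chi,k-1)\overline{\mathcal{N}(\chi,k)}$ (whose length is $q^{o(1)} \ll q^{1-\delta}$), and invoke the orthogonality $\sumstar_{\chi\shortmod q}\chi(m)\overline{\chi}(n) = \phis(q)\delta_{m\equiv n}$ for $(mn,q)=1$. The diagonal produces a main term of order $\phis(q)(\log q)^{k^2}$ after the resulting Dirichlet series is recognized as a local Euler product; the off-diagonal contribution is negligible because the total length is $o(q)$. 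The mean values $\sumstar_\chi|\mathcal{N}(\chi,k)|^2$ and $\sumstar_\chi|\mathcal{N}(\chi,k-1)|^{2k/(k-1)}$ split over the intervals $I_j$ by near-independence, and each factor is controlled by standard moment bounds for short Dirichlet polynomials combined with Mertens' estimate. The main obstacle I anticipate is the bookkeeping in the twisted first moment: verifying that, after the truncations in the $N_j$ are absorbed, the diagonal factors into an Euler product yielding precisely $(\log q)^{k^2}$, and tuning the parameters $\theta,\alpha_j,l_j$ so that all error terms — off-diagonal, truncation, and approximate functional equation — are absorbed into $o(\phis(q)(\log q)^{k^2})$.
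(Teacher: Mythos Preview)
Your overall strategy is the paper's: the Heap--Soundararajan principle with $\mathcal{N}(\chi,\alpha)=\prod_j E_{\ell_j}(\alpha\mathcal{P}_j(\chi))$, a twisted first moment computed by orthogonality, and mean values of the mollifier. But there is a genuine gap in the range $0<k<1$. You write that the method reduces to ``a twisted first moment of $L$ against $\mathcal{N}$ and certain mean values of $\mathcal{N}$ alone''; for $k\ge 1$ that is right, but for $0<k<1$ the only workable H\"older split (exponents $2$, $2/(1-k)$, $2/k$) gives
\[
\sumstar_{\chi} L(\tfrac12,\chi)\,\mathcal{N}(\chi,k-1)\,\overline{\mathcal{N}(\chi,k)} \ll \Bigl(\sumstar_\chi|L|^{2k}\Bigr)^{\frac12}\Bigl(\sumstar_\chi|L|^{2}|\mathcal{N}(\chi,k-1)|^{2}\Bigr)^{\frac{1-k}{2}}\Bigl(\sumstar_\chi|\mathcal{N}(\chi,k)|^{2/k}|\mathcal{N}(\chi,k-1)|^{2}\Bigr)^{\frac{k}{2}},
\]
and the middle factor is a twisted \emph{second} moment of $L$, not a moment of $\mathcal{N}$ alone. (A two-term H\"older with exponents $2k$ and $2k/(2k-1)$ is unavailable once $k\le 1/2$.) Controlling this twisted second moment is the longest computation in the paper (Proposition~\ref{Prop5}): one inserts the approximate functional equation for $|L(\tfrac12,\chi)|^2$, separates even and odd characters via Lemma~\ref{lem:sumoverevenodd}, and must bound the off-diagonal contribution from $ma\equiv\pm nb\pmod q$ with $ma\neq nb$ by $O(q^{1-\varepsilon})$. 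Your proposal omits this ingredient entirely.

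A smaller but non-trivial point: the sums $\sumstar_\chi|\mathcal{N}(\chi,k-1)|^{2k/(k-1)}$ (your $k>1$ case) and $\sumstar_\chi|\mathcal{N}(\chi,k)|^{2/k}|\mathcal{N}(\chi,k-1)|^{2}$ involve non-integer powers of Dirichlet polynomials, so orthogonality does not apply directly. The paper resolves this by noting that when $|\mathcal{P}_j(\chi)|\le \ell_j/10$ one has $N_j(\chi,\alpha)\approx\exp(\alpha\mathcal{P}_j(\chi))$, which collapses the awkward power to $|N_j(\chi,k)|^2$, and by introducing explicit penalty polynomials $\mathcal{Q}_j(\chi,k)=\bigl(12\max(1,k^2)\mathcal{P}_j(\chi)/\ell_j\bigr)^{r_k\ell_j}$ to dominate the contribution from $|\mathcal{P}_j(\chi)|>\ell_j/10$; these $\mathcal{Q}_j$ \emph{are} genuine polynomials and can be averaged. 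Your phrase ``split over the intervals $I_j$ by near-independence, and each factor is controlled by standard moment bounds'' does not supply this mechanism.
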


   Next, we apply the dual principle of M. Radziwi{\l\l} and K. Soundararajan \cite{Radziwill&Sound} to establish sharp upper bounds for a
   restricted range of $k$ as follows.
\begin{theorem}
\label{thmupperbound}
   For large prime $q$ and any real number $k$ such that $0 \leq k \leq 1$, we have
\begin{align}
\label{upperbound}
   \sumstar_{\substack{ \chi \shortmod q }}|L(\tfrac{1}{2},\chi)|^{2k} \ll_k \phis(q)(\log q)^{k^2}.
\end{align}
\end{theorem}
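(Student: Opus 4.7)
The plan is to apply the upper-bound principle of Radziwi{\l\l}--Soundararajan \cite{Radziwill&Sound}, which is the natural dual of the Heap--Soundararajan principle used to prove Theorem \ref{thmlowerbound}. Instead of bounding from below the correlation between $|L(\tfrac12,\chi)|^{2k}$ and a short Dirichlet polynomial approximating $L(\tfrac12,\chi)^{k}$, one bounds $|L(\tfrac12,\chi)|^{2k}$ from above by the square of such a polynomial on a typical set of characters, and then evaluates the resulting Dirichlet-polynomial mean square via orthogonality of Dirichlet characters modulo $q$.

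To set this up, I would partition the primes in $[1, q^{\theta}]$ (for a small $\theta > 0$) into dyadic blocks $I_{1},\ldots,I_{J}$ with $J \asymp \log\log q$, and define the prime sums and truncated exponentials
\begin{align*}
\mathcal{P}_{j}(\chi) = \sum_{p \in I_{j}} \frac{\chi(p)}{\sqrt{p}}, \qquad \mathcal{N}_{j}(\chi, \alpha) = \sum_{r=0}^{R_{j}} \frac{(\alpha \mathcal{P}_{j}(\chi))^{r}}{r!},
\end{align*}
choosing the $R_{j}$ so that $\mathcal{N}(\chi, \alpha) := \prod_{j} \mathcal{N}_{j}(\chi, \alpha)$ is a Dirichlet polynomial of length at most $q^{1-\varepsilon}$. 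The polynomial $\mathcal{N}(\chi, k)$ then plays the role of a short truncated Euler product for $L(\tfrac12, \chi)^{k}$.

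Next, I would split the primitive characters modulo $q$ into a ``good'' set $\mathcal{G}$ on which $|\mathcal{P}_{j}(\chi)| \leq \ell_{j}$ for every block $j$ (with suitable thresholds $\ell_{j}$) and a family of exceptional sets $\mathcal{E}_{j}$ indexed by the first $j$ at which this fails. The heart of the argument is then the pointwise upper bound
\begin{align*}
|L(\tfrac12,\chi)|^{2k} \ll |\mathcal{N}(\chi,k)|^{2} \quad (\chi \in \mathcal{G}),
\end{align*}
valid because on $\mathcal{G}$ the truncation $\mathcal{N}(\chi,k)$ is within a bounded factor of the formal Euler product $\exp(k\sum_{p \leq q^{\theta}} \chi(p)/\sqrt{p})$. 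Summing this over $\mathcal{G}$ and using orthogonality to evaluate
\begin{align*}
\sumstar_{\chi \shortmod q} |\mathcal{N}(\chi,k)|^{2},
\end{align*}
one obtains a main term of the expected size $\phis(q)(\log q)^{k^{2}}$: the Dirichlet coefficients of $|\mathcal{N}(\chi,k)|^{2}$ factor block by block, and the diagonal contribution $\phis(q)\sum_{m}|c_{m}|^{2}/m$ splits into local factors of the form $\exp(k^{2}\sum_{p \in I_{j}}1/p)$, which multiply via Mertens' theorem to $(\log q)^{k^{2}}$. The contribution of each $\mathcal{E}_{j}$ is handled by H\"older's inequality together with high moments of $\mathcal{P}_{j}(\chi)$, estimated by orthogonality exactly as in the proof of Theorem \ref{thmlowerbound}; because $k \leq 1$, only second moments of $|L(\tfrac12,\chi)|$ ever enter, and the exceptional-set contributions can be absorbed.

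The main obstacle is establishing the pointwise upper bound on $\mathcal{G}$ unconditionally: without the generalized Riemann hypothesis one cannot directly invoke Soundararajan's upper bound for $\log|L(\tfrac12,\chi)|$. The substitute is to combine the approximate functional equation for $L(\tfrac12,\chi)$ with an unconditional truncation of its Euler product, in the spirit of Harper \cite{Harper}. The restriction $0 \leq k \leq 1$ is essential to the method: it is what keeps the relevant twisted moment at the second level, so that $\mathcal{N}(\chi,k)$ can be taken of length at most $q^{1-\varepsilon}$ and the orthogonality evaluation applies; extending to $k > 1$ would require a twisted fourth moment --- available for prime modulus by \cite{Young2011} --- together with a significantly more intricate exceptional-set analysis.
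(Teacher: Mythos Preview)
There is a genuine gap in your proposal, and it is precisely at the step you yourself flag as the ``main obstacle''. The pointwise inequality
\[
|L(\tfrac12,\chi)|^{2k} \ll |\mathcal{N}(\chi,k)|^{2} \qquad (\chi \in \mathcal{G})
\]
is \emph{not} what the Radziwi{\l\l}--Soundararajan principle delivers. On the good set $\mathcal{G}$ one only knows that $\mathcal{N}(\chi,k)$ is close to $\exp\big(k\sum_{p}\chi(p)/\sqrt{p}\big)$; one does \emph{not} know that this short Euler product majorizes $L(\tfrac12,\chi)^{k}$. That majorization is essentially Soundararajan's GRH upper bound for $\log|L(\tfrac12,\chi)|$, and your suggested ``substitute'' (approximate functional equation plus an unconditional Euler-product truncation, ``in the spirit of Harper'') does not supply it either --- Harper's argument is itself conditional on GRH. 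If the pointwise bound were available, the second moment of $L$ would never enter at all, so your remark that ``only second moments of $|L(\tfrac12,\chi)|$ ever enter'' is a symptom that the mechanism has been misidentified.

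The actual device, used in the paper's Lemma~\ref{lem2}, is to bring in a \emph{second} polynomial $\mathcal{N}(\chi,k-1)$ and observe that on $\mathcal{G}$ one has the pointwise lower bound
\[
|\mathcal{N}(\chi,k-1)|^{2k}\,|\mathcal{N}(\chi,k)|^{2(1-k)} \gg 1,
\]
since on that set both factors are close to $\exp\big(2k(k-1)\Re\mathcal{P}(\chi)\big)\exp\big(2(1-k)k\Re\mathcal{P}(\chi)\big)=1$. One then inserts this harmless factor, applies H\"older with exponents $1/k$ and $1/(1-k)$, and obtains
\[
\sumstar_{\chi}|L(\tfrac12,\chi)|^{2k}
\ll \Big(\sumstar_{\chi}|L(\tfrac12,\chi)|^{2}\,|\mathcal{N}(\chi,k-1)|^{2}\Big)^{k}
\Big(\sumstar_{\chi}|\mathcal{N}(\chi,k)|^{2}\Big)^{1-k},
\]
with suitable $\mathcal{Q}_{j}$-corrections for the exceptional sets. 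The point is that $|L|^{2k}\cdot|\mathcal{N}(\chi,k-1)|^{2k}$ raised to the $1/k$ gives a genuine \emph{second} moment of $L$ twisted by a short Dirichlet polynomial, which can be evaluated unconditionally via the approximate functional equation and orthogonality (Proposition~\ref{Prop5}). Your sketch has the right general philosophy and the right auxiliary objects, but it lacks this ``mollifier'' $\mathcal{N}(\chi,k-1)$ and the H\"older step that converts the problem to a tractable twisted second moment; without them the argument does not close.
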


   Note that we can combine Theorem \ref{thmlowerbound} and \ref{thmupperbound} together to obtain the following result concerning the order
   of magnitude of our family of $L$-functions.
\begin{theorem}
\label{thmorderofmag}
   For large prime $q$ and any real number $k$ such that $0 \leq k \leq 1$, we have
\begin{align}
\label{orderofmag}
   \sumstar_{\substack{ \chi \shortmod q }}|L(\tfrac{1}{2},\chi)|^{2k} \asymp \phis(q)(\log q)^{k^2}.
\end{align}
\end{theorem}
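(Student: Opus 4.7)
The proof of Theorem \ref{thmorderofmag} will be an immediate consequence of the two preceding theorems. For any real $k$ in the overlap range $0 \leq k \leq 1$, Theorem \ref{thmlowerbound} supplies the lower bound
\begin{equation*}
\sumstar_{\chiq} |L(\tfrac{1}{2}, \chi)|^{2k} \gg_k \phis(q)(\log q)^{k^2},
\end{equation*}
while Theorem \ref{thmupperbound} supplies the matching upper bound
\begin{equation*}
\sumstar_{\chiq} |L(\tfrac{1}{2}, \chi)|^{2k} \ll_k \phis(q)(\log q)^{k^2}.
\end{equation*}
Chaining these two inequalities yields exactly \eqref{orderofmag}, with the implicit constant in $\asymp$ depending on $k$. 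The interval $[0,1]$ is precisely the intersection of the ranges of validity of the two flanking theorems, so no further case analysis or restriction is needed.

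Accordingly, my plan for Theorem \ref{thmorderofmag} amounts to little more than a bookkeeping step: check that both bounds are stated with $k$-dependent implicit constants (they are, as indicated by the subscripts $\gg_k$ and $\ll_k$), observe that the relevant constant in Theorem \ref{thmlowerbound} is harmless at $k=0$ and $k=1$, and note that the resulting two-sided inequality is by definition the claim $\asymp \phis(q)(\log q)^{k^2}$. No further estimates enter.

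It is worth emphasising that this order-of-magnitude result is consistent with the conjectured asymptotic \eqref{moments}, which predicts not only the exponent but also an explicit leading constant $C_k$. The exponent $k^2$ itself is predicted by the Katz--Sarnak philosophy from the unitary symmetry of the family of primitive Dirichlet characters modulo $q$, matching the moment computation for characteristic polynomials of Haar-random unitary matrices of dimension $N \sim \log q$. The real conceptual obstacle does not lie in Theorem \ref{thmorderofmag} at all, but rather in Theorem \ref{thmupperbound}: the restriction to $0 \leq k \leq 1$ reflects the current limits of the unconditional upper-bound technology, and extending Theorem \ref{thmorderofmag} to $k > 1$ would require either genuinely new input into the Radziwi\l\l--Soundararajan principle or a hypothesis such as GRH.
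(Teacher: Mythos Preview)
Your proposal is correct and matches the paper's own treatment exactly: the paper presents Theorem \ref{thmorderofmag} simply as the combination of Theorems \ref{thmlowerbound} and \ref{thmupperbound}, with no additional argument given. Your added remarks about the Katz--Sarnak philosophy and the $k\le 1$ restriction are accurate commentary but not part of the proof itself.
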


   We notice that such a result in \eqref{orderofmag} is already implied by the above mentioned result of D. R. Heath-Brown \cite{HB2010} and
   that of  V. Chandee and X. Li \cite{C&L}. In particular, the case $k=1$ of \eqref{orderofmag} is explicitly given in \cite{C&L}. Moreover,
   the result is shown to be valid for $k=3/2$ as well by H. M. Bui, K. Pratt, N. Robles and A. Zaharescu \cite[Theorem 1.4]{BPRZ}. This case
   is achieved by employing various tools including a result on a long mollified second moment of the corresponding family of $L$-functions
   given in \cite[Theorem 1.1]{BPRZ}. In our proofs of Theorems \ref{thmlowerbound} and \ref{thmupperbound}, we also need to evaluate certain
   twisted second moments for the same family. The lengths of the corresponding Dirichlet polynomials are however short so that the main
   contributions come only from the diagonal terms. Hence, only the orthogonality relation for characters is needed to complete our work. We
   also point out here that as it is mentioned in \cite{BPRZ} that one may apply the work of B. Hough \cite{Hough2016} or R. Zacharias
   \cite{Z2019} on twisted fourth moment for the family of Dirichlet $L$-functions modulo $q$ to obtain sharp upper bounds on all moments
   below the fourth. We decide to use the twisted second moment here to keep our exposition simple by observing that it is needed for
   obtaining both the lower bounds and the upper bounds.

\section{Preliminaries}
\label{sec 2}

  We include a few auxiliary results in this section. We also reserve the letter $p$ for a prime number in this paper and
we recall the following result from \cite[Lemma 2.2]{Gao2021-2}.
\begin{lemma}
\label{RS} Let $x \geq 2$. We have, for some constant $b$,
$$
\sum_{p\le x} \frac{1}{p} = \log \log x + b+ O\Big(\frac{1}{\log x}\Big).
$$
 Also, for any integer $j \geq 1$, we have
$$
\sum_{p\le x} \frac {(\log p)^j}{p} = \frac {(\log x)^j}{j} + O((\log x)^{j-1}).
$$
\end{lemma}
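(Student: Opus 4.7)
The plan is to treat both identities by reducing them to the classical partial-sum estimate
$$ A(x) := \sum_{p \leq x} \frac{\log p}{p} = \log x + O(1), $$
which is itself a consequence of Chebyshev-type bounds on $\theta(x) = \sum_{p \leq x} \log p$: from $\theta(x) = x + O(x/\log x)$ (or even just $\theta(x) = O(x)$ combined with a sharper estimate sufficient for the error) one applies Abel summation with $f(t) = 1/t$ to pass from $\theta$ to $A(x)$. Taking this as the starting point, the first identity follows by another application of partial summation with the weight $f(t) = 1/\log t$, which yields
$$ \sum_{p \leq x} \frac{1}{p} = \frac{A(x)}{\log x} + \int_2^x \frac{A(t)}{t(\log t)^2} \, dt, $$
and substituting $A(t) = \log t + O(1)$ converts the right-hand side into $\log\log x + b + O(1/\log x)$, where $b$ absorbs the convergent tail of the integral (the Meissel--Mertens constant together with the lower-limit contribution).

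For the generalized identity with integer $j \geq 1$, I would again apply Abel summation, now with weight $f(t) = (\log t)^{j-1}$ against the measure $d A(t)$. This gives
$$ \sum_{p \leq x} \frac{(\log p)^j}{p} = A(x)(\log x)^{j-1} - (j-1)\int_2^x A(t) \frac{(\log t)^{j-2}}{t} \, dt. $$
Inserting $A(t) = \log t + O(1)$ and evaluating the elementary integral $\int_2^x (\log t)^{j-1}/t \, dt = (\log x)^j/j + O(1)$ produces
$$ (\log x)^j + O((\log x)^{j-1}) - \frac{j-1}{j}(\log x)^j + O((\log x)^{j-1}) = \frac{(\log x)^j}{j} + O((\log x)^{j-1}), $$
which is the claimed estimate.

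There is no real obstacle here, as the statement is classical Mertens-type material; the only point requiring mild care is making sure the error term from substituting $A(t) = \log t + O(1)$ into the integral in the second calculation really is $O((\log x)^{j-1})$ and not larger, which is immediate from $\int_2^x (\log t)^{j-2}/t \, dt \ll (\log x)^{j-1}$. Once $j = 1$ is in hand, the induction-like partial-summation argument is uniform in $j$ up to constants depending on $j$.
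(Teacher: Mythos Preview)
Your argument is correct and follows the standard route via Abel summation from Mertens' first theorem $A(x)=\sum_{p\le x}(\log p)/p=\log x+O(1)$. The paper, however, does not supply any proof of this lemma at all: it simply imports the statement as \cite[Lemma~2.2]{Gao2021-2} and moves on. So there is no ``paper's proof'' to compare against; you have written out what the cited reference presumably contains (or a standard equivalent thereof).

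One small remark: you invoke $\theta(x)=x+O(x/\log x)$ to obtain $A(x)=\log x+O(1)$, but this is stronger than necessary, as the $O(1)$ in Mertens' first theorem follows already from Chebyshev-type bounds $\theta(x)\asymp x$ without the prime number theorem. Since the lemma as stated only claims $O(1)$ there and $O(1/\log x)$ in the first identity, you may want to phrase the starting input accordingly. Otherwise the partial-summation computations you give are clean and the error bookkeeping in the second identity is handled correctly.
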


  Next, we note the following approximate functional equation for $|L(1/2, \chi)|^2$.
\begin{lemma}
\label{PropDirpoly}
  Let $\af=0$ or $1$ be given by $\chi(-1)=(-1)^{\af}$. We have
\begin{align}
\label{lsquareapprox}
|L(\half, \chi)|^2 = 2 \sum^{\infty}_{a, b=1} \frac{\chi(a) \overline{\chi}(b)}{\sqrt{ab}} \Wf_{\af} \left(\frac {\pi ab}{q}\right),
\end{align}
  where
$$ \Wf_{\af}(x) = \frac{1}{2\pi i} \int\limits_{(c)} \frac{\Gamma\left(\frac{1}{4} + \frac{s + \af}{2}\right)^2}{\Gamma\left(\frac{1}{4} +
\frac{\af}{2} \right)^2 } x^{-s} \> \frac{ds}{s}.$$
  Moreover, the function $\Wf_{\af}(x)$ is real valued and satisfies the bound that for any $c>0$,
\begin{align*}
 \Wf_{\af}(x)  \ll_c \min( 1 , x^{-c}).
\end{align*}
\end{lemma}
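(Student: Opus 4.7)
The plan is the standard contour-shift derivation of the approximate functional equation for $|L(\hf,\chi)|^2$. I set
$$M(s) = \frac{\Gamma(\frac14+\frac{s+\af}{2})^{2}}{\Gamma(\frac14+\frac{\af}{2})^{2}}, \qquad J = \frac{1}{2\pi i}\int_{(2)} M(s)\,L(\hf+s,\chi)\,L(\hf+s,\overline{\chi})\,\Big(\frac{q}{\pi}\Big)^{s}\frac{ds}{s}.$$
On the line $\Re(s)=2$ the product $L(\hf+s,\chi)L(\hf+s,\overline{\chi})$ converges absolutely as a double Dirichlet series, so interchanging summation with integration writes $J = \sum_{a,b\ge 1}\chi(a)\overline{\chi}(b)(ab)^{-1/2}\Wf_{\af}(\pi ab/q)$. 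Thus the identity \eqref{lsquareapprox} reduces to showing $|L(\hf,\chi)|^{2}=2J$.

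To obtain this, I shift the contour in $J$ from $\Re(s)=2$ to $\Re(s)=-2$. The integrand is holomorphic in the strip except for a simple pole at $s=0$, whose residue is $M(0)|L(\hf,\chi)|^{2}=|L(\hf,\chi)|^{2}$; the shift is legitimate by Stirling's bound on $M(s)$ along horizontal segments. On the new contour I apply the functional equation for primitive $\chi \bmod q$,
$$L(\hf+s,\chi) = \epsilon(\chi)\Big(\frac{q}{\pi}\Big)^{-s}\frac{\Gamma(\frac14+\frac{\af-s}{2})}{\Gamma(\frac14+\frac{\af+s}{2})} L(\hf-s,\overline{\chi}),$$
together with its analogue for $\overline{\chi}$. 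The key observation is that $\epsilon(\chi)\epsilon(\overline{\chi})=1$, which follows from $\tau(\chi)\tau(\overline{\chi})=\chi(-1)q$ together with $\chi(-1)=(-1)^{\af}$. Consequently the product transforms so that $M(s)(q/\pi)^{s}$ becomes $M(-s)(q/\pi)^{-s}$, and after the substitution $s\mapsto -s$ the integral on $\Re(s)=-2$ equals $-J$. The residue theorem then gives $|L(\hf,\chi)|^{2} = J-(-J) = 2J$, as required.

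For the bound on $\Wf_{\af}$, I shift the contour in its defining integral. Moving to $\Re(s)=c$ with $c>0$ and estimating the gamma quotient via Stirling (polynomial growth in $|\Im s|$ tempered by $e^{-\pi|\Im s|/2}$ decay) gives $\Wf_{\af}(x)\ll x^{-c}$. Moving instead to $\Re(s)=-c$ for small $c>0$ picks up the residue $M(0)=1$ at the pole and leaves an $O(x^{c})$ error, so $\Wf_{\af}(x)=1+O(x^{c})$ for small $x$. Combining the two bounds yields $\Wf_{\af}(x)\ll \min(1,x^{-c})$. Real-valuedness follows from $\overline{M(s)}=M(\bar s)$: conjugating and substituting $s\mapsto \bar s$ returns the integral to itself.

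I do not anticipate serious obstacles, as the argument is textbook. The only delicate points are (i) the identity $\epsilon(\chi)\epsilon(\overline{\chi})=1$, without which the contributions on the two contours would not combine into the factor of $2$; and (ii) verifying legitimacy of the vertical contour shifts, which requires only the exponential decay of the gamma quotient $M(s)$ off the real axis.
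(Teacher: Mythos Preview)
Your argument is correct and is precisely the standard derivation that the paper itself defers to: the paper does not write out a proof but simply cites \cite[(1.2)--(1.4) and Lemma~2]{Sound2007} for the identity and bound, and \cite[Lemma~2.1]{sound1} for real-valuedness. The contour shift exploiting $\epsilon(\chi)\epsilon(\overline{\chi})=1$, the Stirling estimates on $M(s)$, and the conjugation-symmetry argument are exactly what those references contain, so there is no difference in approach to report.
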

  The above lemma follows by combining equations \cite[(1.2)-(1.4)]{Sound2007} and Lemma 2 there, together with the observation that the
  property that $\Wf_{\af}(x)$ is real valued can be established similar to \cite[Lemma 2.1]{sound1}.

  The presence of $\Wf_{\af}(x)$ in the expression for $|L(1/2, \chi)|^2$ makes it natural to consider sums over odd and even characters
  separately when summing over $\chi$ modulo $q$. For this reason, we denote $\phi(q)$ for the Euler totient function and note the following
  orthogonal relations.
\begin{lemma}{\cite[Lemma 1]{C&L}}
\label{lem:sumoverevenodd} Let $\sum_{\chi}^{(e)}, \sum_{\chi}^{(o)}$ indicate the sum over non-trivial primitive even (respectively odd)
characters modulo $q$. Then
$$ {\sum_{\chi}}^{(e)} \chi(a) = \left\{ \begin{array}{ll} \frac{\phi(q) - 2}{2} & {\rm if} \ \ a \equiv \pm 1 \ ({\rm mod} \ q ) \\
-1 & {\rm if} \ \ a  \not\equiv \pm 1 \ ({\rm mod} \ q ) \ {\rm and} \ (a,q) = 1,  \end{array} \right. $$
and
$$ {\sum_{\chi}}^{(o)} \chi(a) = \left\{ \begin{array}{ll} \frac{\phi(q) }{2} & {\rm if} \ \ a \equiv 1 \ ({\rm mod} \ q ) \\
\frac{-\phi(q)}{2} & {\rm if} \ \ a \equiv -1 \ ({\rm mod} \ q ) \\
0 & {\rm if} \ \ a  \not\equiv \pm 1 \ ({\rm mod} \ q ) \ {\rm and} \ (a,q) = 1.  \end{array} \right.$$
\end{lemma}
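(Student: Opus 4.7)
The plan is to derive both identities from the standard character orthogonality relation modulo $q$, combined with the parity projectors $\frac{1 \pm \chi(-1)}{2}$. The key observation is that since $q$ is prime, every non-trivial character modulo $q$ is automatically primitive; moreover the trivial character $\chi_0$ is even, so there are exactly $(\phi(q)-2)/2$ non-trivial primitive even characters and $\phi(q)/2$ primitive odd characters, consistent with the stated formulas.

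First I would record two variants of orthogonality. For $(a,q)=1$ one has the familiar
\begin{align*}
\sum_{\chi \bmod q} \chi(a) = \begin{cases} \phi(q), & a \equiv 1 \pmod q, \\ 0, & \text{otherwise,} \end{cases}
\end{align*}
and replacing $a$ by $-a$ together with $\chi(-a) = \chi(-1)\chi(a)$ gives the twisted version
\begin{align*}
\sum_{\chi \bmod q} \chi(-1)\chi(a) = \begin{cases} \phi(q), & a \equiv -1 \pmod q, \\ 0, & \text{otherwise.} \end{cases}
\end{align*}

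For the even-character sum I would write
\begin{align*}
{\sum_{\chi}}^{(e)} \chi(a) = \sum_{\chi \bmod q} \frac{1+\chi(-1)}{2}\chi(a) \; - \; \chi_0(a),
\end{align*}
where the subtraction removes the contribution of the trivial (even) character. Inserting the two orthogonality identities and using $\chi_0(a)=1$ for $(a,q)=1$ gives at once the values $(\phi(q)-2)/2$ when $a \equiv \pm 1 \pmod q$ and $-1$ otherwise, as claimed. The odd-character sum is treated by the analogous projector $\frac{1-\chi(-1)}{2}$; since $\chi_0$ is not odd no correction term appears, and the three stated cases $\phi(q)/2$, $-\phi(q)/2$, $0$ follow immediately from the same substitution, the sign difference coming from the minus sign in the projector.

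There is no genuine obstacle in this argument; it is a direct manipulation of character orthogonality modulo a prime. The only point requiring any care is the bookkeeping of the trivial character when separating the family by parity, which is why the $-1$ and $(\phi(q)-2)/2$ values in the even case differ from the cleaner odd case by exactly $\chi_0(a)$.
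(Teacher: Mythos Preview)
Your argument is correct. The paper does not actually supply a proof of this lemma; it merely quotes the result from \cite[Lemma 1]{C\&L}, so there is no in-text proof to compare against. Your approach via the parity projectors $\tfrac{1\pm\chi(-1)}{2}$ combined with standard orthogonality, together with the removal of the principal character in the even case, is the standard derivation and is essentially how the cited reference proceeds as well.
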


\section{Outline of the Proofs}
\label{sec 2'}

  We may assume that $q$ is a large prime number and we note that in this case we have $\phis(q)=q-2$. As the case $k=1$ for both
  \eqref{lowerbound} and \eqref{upperbound} is known, we may assume in our proofs that $k \neq 1$ is a fixed positive real number and let $N,
  M$ be two large natural numbers depending on $k$ only and  and denote $\{ \ell_j \}_{1 \leq j \leq R}$ for a sequence of even natural
  numbers such that $\ell_1= 2\lceil N \log \log q\rceil$ and $\ell_{j+1} = 2 \lceil N \log \ell_j \rceil$ for $j \geq 1$, where $R$ is
  defined to the largest natural number satisfying $\ell_R >10^M$.  We may assume that $M$ is so chosen so that we have $\ell_{j} >
  \ell_{j+1}^2$ for all $1 \leq j \leq R-1$ and this further implies that we have
\begin{align}
\label{sumoverell}
  \sum^R_{j=1}\frac 1{\ell_j} \leq \frac 2{\ell_R}.
\end{align}

    We denote ${ P}_1$ for the set of odd primes not exceeding $q^{1/\ell_1^2}$ and
${ P_j}$ for the set of primes lying in the interval $(q^{1/\ell_{j-1}^2}, q^{1/\ell_j^2}]$ for $2\le j\le R$. For each $1 \leq j \leq R$, we
write
\begin{equation*}
{\mathcal P}_j(\chi) = \sum_{p\in P_j} \frac{1}{\sqrt{p}} \chi(p), \quad  {\mathcal Q}_j(\chi, k) =\Big (\frac{12 \max (1, k^2) {\mathcal
P}(\chi) }{\ell_j}\Big)^{r_k\ell_j},
\end{equation*}
  where we define $r_k=\lceil 1+1/k \rceil+1$ for $0<k<1$ and $r_k=\lceil k /(2k-1) \rceil+1$ for $k>1$. We further define ${\mathcal Q}_{R+1}(\chi, k)=1$.

  We define for any non-negative integer $\ell$ and any real number $x$,
\begin{equation*}
E_{\ell}(x) = \sum_{j=0}^{\ell} \frac{x^{j}}{j!}.
\end{equation*}
  Further, we define for each $1 \leq j \leq R$ and any real number $\alpha$,
\begin{align*}
{\mathcal N}_j(\chi, \alpha) = E_{\ell_j} (\alpha {\mathcal P}_j(\chi)), \quad \mathcal{N}(\chi, \alpha) = \prod_{j=1}^{R} {\mathcal
N}_j(\chi,\alpha).
\end{align*}

 Before we proceed to our discussions below, we would like to point out here without further notice that in the rest of the paper, when we use
 $\ll$ or the $O$-symbol to estimate various quantities needed, the implicit constants involved only depend on $k$ and are uniform with
 respect to $\chi$. We shall also make the convention that an empty product is defined to be $1$.

  We now present the needed versions in our setting of the lower bounds principle of W. Heap and K. Soundararajanand in \cite{H&Sound} and the
  upper bounds principle of M. Radziwi{\l\l} and K. Soundararajan in \cite{Radziwill&Sound} in the following two lemmas. We choose to state our results suitable for our proofs of Theorems \ref{thmlowerbound} and \ref{thmupperbound} only. One may easily adjust them to study moments for various other families of $L$-functions.
  
  Our first lemma corresponds to the lower bounds principle.
\begin{lemma}
\label{lem1}
 With notations as above. For $0<k<1$, we have
\begin{align}
\label{basiclowerbound}
\begin{split}
\sumstar_{\substack{ \chi \shortmod q }}L(\tfrac{1}{2},\chi)  \mathcal{N}(\chi, k-1) \mathcal{N}(\overline{\chi}, k)
 \ll & \Big ( \sumstar_{\substack{ \chi \shortmod q }}|L(\tfrac{1}{2},\chi)|^{2k} \Big )^{1/2}\Big ( \sumstar_{\substack{ \chi \shortmod q
 }}|L(\tfrac{1}{2},\chi)|^2 |\mathcal{N}(\chi, k-1)|^2  \Big)^{(1-k)/2} \\
 & \times \Big ( \sumstar_{\substack{ \chi \shortmod q }}   \prod^R_{j=1}\big ( |{\mathcal N}_j(\chi, k)|^2+ |{\mathcal Q}_j(\chi,k)|^2 \big )
 \Big)^{k/2}.
\end{split}
\end{align}
 For $k>1$, we have
\begin{align}
\label{basicboundkbig}
\begin{split}
 & \sumstar_{\substack{ \chi \shortmod q }}L(\tfrac{1}{2},\chi)  \mathcal{N}(\chi, k-1) \mathcal{N}(\overline{\chi}, k)
 \leq  \Big ( \sumstar_{\substack{ \chi \shortmod q }}|L(\tfrac{1}{2},\chi)|^{2k} \Big )^{\frac {1}{2k}}\Big ( \sumstar_{\substack{ \chi
 \shortmod q }} \prod^R_{j=1} \big ( |{\mathcal N}_j(\chi, k)|^2+ |{\mathcal Q}_j(\chi,k)|^2 \big ) \Big)^{\frac {2k-1}{2k}}.
\end{split}
\end{align}
  The implied constants in \eqref{basiclowerbound} and \eqref{basicboundkbig} depend on $k$ only.
\end{lemma}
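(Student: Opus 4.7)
My plan is to apply Hölder's inequality in a form that peels off the desired moments on the right and leaves a sum whose integrand factors as a product over the blocks $P_j$; I would then reduce to a pointwise (in $\chi$) inequality that itself factors over $j$ and is handled by splitting on the size of $|\mathcal{P}_j(\chi)|$ relative to $\ell_j$. A standing preliminary is that $|\mathcal{N}_j(\bar\chi,\alpha)|=|\mathcal{N}_j(\chi,\alpha)|$ for every real $\alpha$, since $\mathcal{P}_j(\bar\chi)=\overline{\mathcal{P}_j(\chi)}$ and $E_{\ell_j}$ has real coefficients; in particular the $\mathcal{N}(\bar\chi,k)$ factor behaves, in absolute value, just like $\mathcal{N}(\chi,k)$.

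For \eqref{basicboundkbig} ($k>1$), I would apply Hölder with the conjugate exponents $2k$ and $2k/(2k-1)$, treating $L(\tfrac12,\chi)$ and $\mathcal{N}(\chi,k-1)\mathcal{N}(\bar\chi,k)$ as the two groups; the first group produces the factor $(\sumstar|L(\tfrac12,\chi)|^{2k})^{1/(2k)}$, and the remainder reduces to the factor-by-factor pointwise bound
\begin{align*}
\bigl|\mathcal{N}_j(\chi,k-1)\mathcal{N}_j(\chi,k)\bigr|^{2k/(2k-1)}\;\le\;|\mathcal{N}_j(\chi,k)|^{2}+|\mathcal{Q}_j(\chi,k)|^{2}.
\end{align*}
For \eqref{basiclowerbound} ($0<k<1$), I would split $|L(\tfrac12,\chi)\mathcal{N}(\chi,k-1)\mathcal{N}(\bar\chi,k)|$ as $|L|^{k}\cdot\bigl(|L|\,|\mathcal{N}(\chi,k-1)|\bigr)^{1-k}\cdot\bigl(|\mathcal{N}(\chi,k-1)|^{k}|\mathcal{N}(\bar\chi,k)|\bigr)$ and apply Hölder with exponents $2,\ 2/(1-k),\ 2/k$; the first two factors on the right of \eqref{basiclowerbound} then appear immediately, and the remaining pointwise requirement becomes
\begin{align*}
|\mathcal{N}_j(\chi,k-1)|^{2}\,|\mathcal{N}_j(\chi,k)|^{2/k}\;\le\;|\mathcal{N}_j(\chi,k)|^{2}+|\mathcal{Q}_j(\chi,k)|^{2}.
\end{align*}

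Both pointwise bounds are verified by splitting on the size of $|\mathcal{P}_j(\chi)|$ relative to a threshold of order $\ell_j$. In the small regime $|\mathcal{P}_j(\chi)|\le\ell_j/(12\max(1,k^2))$, the Stirling tail estimate yields $\mathcal{N}_j(\chi,\alpha)=\exp(\alpha\mathcal{P}_j(\chi))+O(2^{-\ell_j})$ uniformly for $|\alpha|\le k$, and the algebraic identity $a(k-1)+bk=2k$ with $(a,b)$ equal to $(2k/(2k-1),2k/(2k-1))$ or $(2,2/k)$ makes both sides of the pointwise inequality agree, to leading order, with $|\exp(k\mathcal{P}_j(\chi))|^{2}$; any residual error is absorbed by $|\mathcal{Q}_j(\chi,k)|^{2}\ge 0$ or by the implied constant in the $\ll_k$ of the statement. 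In the large regime, the Stirling bound $x^{\ell_j}/\ell_j!\le(ex/\ell_j)^{\ell_j}$ gives $|\mathcal{N}_j(\chi,\alpha)|\ll_k(e|\alpha|\,|\mathcal{P}_j(\chi)|/\ell_j)^{\ell_j}$, so each left-hand side is controlled by a constant multiple of $(|\mathcal{P}_j(\chi)|/\ell_j)^{2\kappa\ell_j}$, where the critical exponent $\kappa$ equals $2k/(2k-1)$ (for $k>1$) or $1+1/k$ (for $0<k<1$). The definition $r_k=\lceil k/(2k-1)\rceil+1$ (respectively $\lceil 1+1/k\rceil+1$) has been chosen precisely so that $r_k>\kappa$, while the factor $12\max(1,k^2)$ in $\mathcal{Q}_j$ absorbs the Stirling constants $e|\alpha|$, so that $|\mathcal{Q}_j(\chi,k)|^{2}$ dominates the left-hand side in this regime. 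The main obstacle will be arranging the comparison seamlessly across the transition $|\mathcal{P}_j(\chi)|\sim\ell_j$; one must pick $M$ large enough (and hence the $\ell_j$ large enough) that the implicit constants from Stirling and from the exponential tails are swallowed by the slack $r_k-\kappa>0$ and by the factor $12\max(1,k^2)$.
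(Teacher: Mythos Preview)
Your proposal is correct and follows essentially the same route as the paper: the paper applies H\"older with exponents $(2,\,2/(1-k),\,2/k)$ for $0<k<1$ and $(2k,\,2k/(2k-1))$ for $k>1$, obtaining precisely the residual sums $\sumstar|\mathcal{N}(\chi,k)|^{2/k}|\mathcal{N}(\chi,k-1)|^{2}$ and $\sumstar|\mathcal{N}(\chi,k)\mathcal{N}(\chi,k-1)|^{2k/(2k-1)}$ that you identify, and then bounds each block factor pointwise by splitting at the threshold $|\mathcal{P}_j(\chi)|\le\ell_j/10$ (respectively $\ell_j/(10k)$), using the Taylor tail estimate in the small regime and the crude bound $|\mathcal{N}_j(\chi,\alpha)|\le(12|\mathcal{P}_j(\chi)|/\ell_j)^{\ell_j}$ in the large regime, exactly as you outline. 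The only cosmetic differences are the specific thresholds and the way the Stirling constants are packaged; your observation that $r_k$ is chosen so that $r_k>\kappa$ (with $\kappa=1+1/k$ or $k/(2k-1)$) is precisely the mechanism the paper exploits.
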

\begin{proof}
   We assume $0<k<1$ first and apply H\"older's inequality to see that the left side of \eqref{basiclowerbound} is
\begin{align}
\label{basicbound0}
\begin{split}
 \leq & \Big ( \sumstar_{\substack{ \chi \shortmod q }}|L(\tfrac{1}{2},\chi)|^{2k} \Big )^{1/2}\Big ( \sumstar_{\substack{ \chi \shortmod q
 }}|L(\tfrac{1}{2},\chi) \mathcal{N}(\chi, k-1)|^2   \Big)^{(1-k)/2}\Big ( \sumstar_{\substack{ \chi \shortmod q }} |\mathcal{N}(\chi,
 k)|^{2/k}|\mathcal{N}(\chi, k-1)|^{2}  \Big)^{k/2}.
\end{split}
\end{align}

 Notice that we have for $|z| \le aK/10$ with $0<a \leq 1$,
\begin{align}
\label{Ebound}
\Big| \sum_{r=0}^K \frac{z^r}{r!} - e^z \Big| \le \frac{|a z|^{K}}{K!} \le \Big(\frac{a e}{10}\Big)^{K},
\end{align}
   where the last estimation above follows from the observation that
\begin{align}
\label{Stirling}
  (\frac ne)^n \leq n! \leq n(\frac ne)^n.
\end{align}

  We apply \eqref{Ebound} with $z=k{\mathcal P}_j(\chi), K=\ell_j$ and $a=k$ to see that when $|{\mathcal P}_j(\chi)| \le \ell_j/10$,
\begin{align*}
{\mathcal N}_j(\chi, k)=& \exp( k {\mathcal P}_j(\chi))\Big( 1+  O\Big(\exp( k |{\mathcal P}_j(\chi)|)\Big(\frac{k e}{10}\Big)^{\ell_j} \Big )
=  \exp( k {\mathcal P}_j(\chi))\Big( 1+  O\Big( ke^{-\ell_j} \Big )\Big ).
\end{align*}
  Similarly, we have
\begin{align*}
{\mathcal N}_j(\chi, k-1)= & \exp( (k-1) {\mathcal P}_j(\chi))\Big( 1+  O\Big(e^{-\ell_j} \Big ) \Big ).
\end{align*}

   The above estimations then allow us to see that when $|{\mathcal P}_j(\chi)| \le \ell_j/10$,
\begin{align}
\label{est1}
|{\mathcal N}_j(\chi, k)^{\frac {1}{k}} {\mathcal N}_j(\chi, k-1)|^{2}
&= \exp( 2k \Re ({\mathcal P}_j(\chi)))\Big( 1+ O\big( e^{-\ell_j} \big) \Big) = |{\mathcal N}_j(\chi, k)|^2 \Big( 1+ O\big(e^{-\ell_j} \big)
\Big).
\end{align}

  On the other hand, we notice that when $|{\mathcal P}_j(\chi)| \ge \ell_j/10$,
\begin{align*}
\begin{split}
|{\mathcal N}_j(\chi, k)| &\le \sum_{r=0}^{\ell_j} \frac{|{\mathcal P}_j(\chi)|^r}{r!} \le
|{\mathcal P}_j(\chi)|^{\ell_j} \sum_{r=0}^{\ell_j} \Big( \frac{10}{\ell_j}\Big)^{\ell_j-r} \frac{1}{r!}   \le \Big( \frac{12 |{\mathcal
P}_j(\chi)|}{\ell_j}\Big)^{\ell_j} .
\end{split}
\end{align*}
  Observe that the same bound above also holds for $|{\mathcal N}_j(\chi, k-1)|$. It follows from these estimations that when $|{\mathcal
  P}_j(\chi)| \ge \ell_j/10$, we have
\begin{align*}
|{\mathcal N}_j(\chi, k)^{\frac {1}{k}} {\mathcal N}_j(\chi, k-1)|^{2}
& \leq \Big( \frac{12 |{\mathcal P}_j(\chi)|}{\ell_j}\Big)^{2(1+1/k)\ell_j} \leq  |{\mathcal Q}_j(\chi, k)|^2.
\end{align*}
  Applying the above together with \eqref{basicbound0} and \eqref{est1} allows us to establish the estimation  given in
  \eqref{basiclowerbound}.

  It remains to consider the case $k>1$ and we apply H\"older's inequality again to see that the left side of \eqref{basicboundkbig} is
\begin{align}
\label{holderkbig}
\begin{split}
 \leq  \Big ( \sumstar_{\substack{ \chi \shortmod q }}|L(\tfrac{1}{2},\chi)|^{2k} \Big )^{\frac {1}{2k}}\Big ( \sumstar_{\substack{ \chi
 \shortmod q }} |\mathcal{N}(\chi, k)\mathcal{N}(\chi, k-1)|^{\frac {2k}{2k-1}}  \Big)^{\frac {2k-1}{2k}}.
\end{split}
\end{align}
  We apply \eqref{Ebound} this time with $z=k{\mathcal P}_j(\chi),K=\ell_j, a=1$ and arguing as above to see that when $|{\mathcal P}(\chi)|
  \le \ell_j/(10k)$, we have
\begin{align}
\label{prodNkbig}
\begin{split}
 |\mathcal{N}_j(\chi, k)\mathcal{N}_j(\chi, k-1)|^{\frac {2k}{2k-1}} = |{\mathcal N}_j(\chi, k)|^2 \Big( 1+ O\big(e^{-\ell_j} \big) \Big).
\end{split}
\end{align}
  Similarly, when $|{\mathcal P}_j(\chi)| \ge \ell_j/(10k)$, we have
\begin{align}
\label{prodNkbig1}
\begin{split}
 |\mathcal{N}_j(\chi, k)\mathcal{N}_j(\chi, k-1)|^{\frac {2k}{2k-1}} \leq  \Big( \frac{12k^2 |{\mathcal P}_j(\chi)|}{\ell_j}\Big)^{\frac {2k
 \ell_j}{2k-1}}
  \leq  |{\mathcal Q}_j(\chi, k)|^2.
\end{split}
\end{align}
  We then deduce the estimation given in \eqref{basicboundkbig} readily from \eqref{holderkbig}, \eqref{prodNkbig} and \eqref{prodNkbig1}.
  This completes the proof of the lemma.
\end{proof}

  Our next lemma corresponds to the upper bounds principle.  Instead of the form used for obtaining upper bounds given in \cite{Radziwill&Sound}, we decide to adapt one that resembles what is given in Lemma \ref{lem1} above and also derive it via a similar fashion. One may compare our next lemma with \cite[Proposition 3]{Radziwill&Sound} and \cite[Proposition 2.1]{HRS}.
\begin{lemma}
\label{lem2}
 With notations as above. We have for $0<k<1$,
\begin{align}
\label{basiclowerbound2}
\begin{split}
& \sumstar_{\substack{ \chi \shortmod q }}|L(\tfrac{1}{2},\chi)|^{2k} \\
 \ll & \Big ( \sumstar_{\substack{ \chi \shortmod q }}|L(\tfrac{1}{2},\chi)|^2 \sum^{R}_{v=0} \prod^v_{j=1}\Big ( |\mathcal{N}_j(\chi, k-1)|^2 \Big ) |{\mathcal
 Q}_{v+1}(\chi, k)|^{2}
 \Big)^{k} \Big ( \sumstar_{\substack{ \chi \shortmod q }}  \sum^{R}_{v=0}  \Big (\prod^v_{j=1}|\mathcal{N}_j(\chi, k)|^2\Big )|{\mathcal
 Q}_{v+1}(\chi, k)|^{2} \Big)^{1-k},
\end{split}
\end{align}
  where the implied constants depend on $k$ only.
\end{lemma}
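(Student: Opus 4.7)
The plan is to adapt the dual of the Heap--Soundararajan principle used in Lemma~\ref{lem1}, in the spirit of \cite[Proposition~2.1]{HRS}. The key idea is to partition the primitive characters modulo $q$ according to the first index at which the short Dirichlet polynomial $\mathcal{P}_j(\chi)$ becomes too large for the truncation $\mathcal{N}_j(\chi,\alpha)$ to approximate $\exp(\alpha \mathcal{P}_j(\chi))$ accurately, and then to derive the required bound from a pointwise H\"older inequality on each piece. Concretely, for $0 \leq v \leq R$ I would let
\begin{align*}
S_v = \bigl\{\chi : |\mathcal{P}_j(\chi)| \leq \ell_j/10 \text{ for all } 1 \leq j \leq v,\ \text{and } |\mathcal{P}_{v+1}(\chi)| > \ell_{v+1}/10 \text{ if } v < R\bigr\},
\end{align*}
so that the $S_v$ partition the primitive characters modulo $q$. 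On $S_v$ with $v < R$, exactly the computation carried out in the second case of the proof of Lemma~\ref{lem1} gives $|\mathcal{Q}_{v+1}(\chi,k)|^{2} \geq 1$, while on $S_R$ the same inequality is trivial because $\mathcal{Q}_{R+1}(\chi,k) \equiv 1$.

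Next, for $\chi \in S_v$ the estimate \eqref{Ebound} applied with $z = \alpha \mathcal{P}_j(\chi)$ for $\alpha \in \{k-1,k\}$, exactly as in the derivation of \eqref{est1}, combined with the exponent cancellation $2k(k-1) + 2k(1-k) = 0$, yields the pointwise identity
\begin{align*}
\prod_{j=1}^{v} |\mathcal{N}_j(\chi,k-1)|^{2k}\,|\mathcal{N}_j(\chi,k)|^{2(1-k)} = 1 + O(e^{-\ell_1}).
\end{align*}
Combining this with the trivial bound $|L(\tfrac{1}{2},\chi)|^{2k} \leq |L(\tfrac{1}{2},\chi)|^{2k}|\mathcal{Q}_{v+1}(\chi,k)|^{2}$ noted above and regrouping the factors, I obtain for $\chi \in S_v$ the pointwise bound
\begin{align*}
|L(\tfrac{1}{2},\chi)|^{2k} \ll \Bigl(|L(\tfrac{1}{2},\chi)|^{2} \prod_{j=1}^{v} |\mathcal{N}_j(\chi,k-1)|^{2}\,|\mathcal{Q}_{v+1}(\chi,k)|^{2}\Bigr)^{\!k} \Bigl(\prod_{j=1}^{v} |\mathcal{N}_j(\chi,k)|^{2}\,|\mathcal{Q}_{v+1}(\chi,k)|^{2}\Bigr)^{\!1-k}.
\end{align*}

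I would then sum this inequality over $\chi \in S_v$, apply H\"older's inequality with exponents $1/k$ and $1/(1-k)$ to separate the two products, and enlarge the summation range from $S_v$ back to all primitive characters modulo $q$ by positivity of every factor. Summing the resulting estimates over $0 \leq v \leq R$ and applying H\"older's inequality once more to the finite sum in $v$ (again with exponents $1/k$ and $1/(1-k)$) delivers precisely the bound \eqref{basiclowerbound2}.

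The only delicate point in the argument is combinatorial rather than analytic: the H\"older split must be arranged so that the exponents $2k$ on $|\mathcal{N}_j(\chi,k-1)|^2$ and $2(1-k)$ on $|\mathcal{N}_j(\chi,k)|^2$ cancel against each other on $S_v$ to produce the constant function $1$. Once this exponent alignment is fixed, no analytic input beyond what already appears in the proof of Lemma~\ref{lem1} is required, and the rest of the proof is a routine double application of H\"older's inequality.
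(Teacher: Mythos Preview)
Your proposal is correct and follows essentially the same route as the paper: both arguments hinge on the pointwise lower bound $\prod_{j=1}^{v}|\mathcal{N}_j(\chi,k-1)|^{2k}|\mathcal{N}_j(\chi,k)|^{2(1-k)}\,|\mathcal{Q}_{v+1}(\chi,k)|^2 \gg 1$ for the index $v=v(\chi)$ at which $|\mathcal{P}_{v+1}(\chi)|$ first exceeds $\ell_{v+1}/10$, followed by H\"older. The only organizational difference is that the paper first absorbs this pointwise bound into the full sum $\sum_{v=0}^{R}$ (so that $(\sum_v A_v)^k(\sum_v B_v)^{1-k}\gg 1$ holds for every $\chi$) and then applies H\"older once over $\chi$, whereas you keep the partition into $S_v$ explicit and apply H\"older twice; both routes yield \eqref{basiclowerbound2}.
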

\begin{proof}
  Note first that using arguments similar to those in the proof of Lemma \ref{lem1}, we have that when $|{\mathcal P}_j(\chi)| \le \ell_j/10$,
\begin{align}
\label{prodNlowerbound}
 |\mathcal{N}_j(\chi, k-1)|^{2k}|\mathcal{N}_j(\chi, k)|^{2(1-k)} \geq 1+ O\big(e^{-\ell_j} \big ),
\end{align}
  where the implied constants are uniformly bounded for all $j$.

  Now,  if there exists an integer $0 \leq v \leq R-1$ such that $| \mathcal{P}_j (\chi) | \leq \ell_j/10$ whenever $j \leq v$, but with $|
  \mathcal{P}_{v+1} (\chi) | > \ell_{v+1}/10$,  we deduce from the above and the observation that $|{\mathcal Q}_{v+1}(\chi, k)| \geq 1$ when
  $|{\mathcal P}_{v+1}(\chi)| \ge \ell_{v+1}/10$ that
\begin{align*}
 \Big ( \prod^v_{j=1}|\mathcal{N}_j(\chi, k-1)|^{2k}|\mathcal{N}_j(\chi, k)|^{2(1-k)} \Big )|{\mathcal Q}_{v+1}(\chi, k)|^2 \gg 1.
\end{align*}

  If no such $v$ exists, then we must have $| \mathcal{P}_j (\chi) | \leq \ell_j/10$ for all $1 \leq j \leq R$ so that the estimation
  \eqref{prodNlowerbound} is valid for all $j$ and we have
\begin{align*}
 \prod^R_{j=1}|\mathcal{N}_j(\chi, k-1)|^{2k}|\mathcal{N}_j(\chi, k)|^{2(1-k)} \gg 1.
\end{align*}

  In either case, we conclude that
\begin{align*}
 \Big (\sum^R_{v=0}\Big ( \prod^v_{j=1}|\mathcal{N}_j(\chi, k-1)|^2 \Big )|{\mathcal Q}_{v+1}(\chi, k)|^2 \Big )^{k}\Big (\sum^R_{v=0}\Big ( \prod^v_{j=1}|\mathcal{N}_j(\chi, k)|^2\Big ) |{\mathcal Q}_{v+1}(\chi, k)|^2 \Big )^{1-k}  \gg 1.
\end{align*}
   We then deduce from this that
\begin{align*}
 & \sumstar_{\substack{ \chi \shortmod q }} |L(\half, \chi)|^{2k} \\
 \ll & \sumstar_{\substack{ \chi \shortmod q }} |L(\half, \chi)|^{2k} \Big (\sum^R_{v=0}\Big ( \prod^v_{j=0}|\mathcal{N}_j(\chi, k-1)|^2 \Big )|{\mathcal Q}_{v+1}(\chi, k)|^2 \Big )^{k}
 \times \Big (\sum^R_{v=0}\Big (  \prod^v_{j=1}|\mathcal{N}_j(\chi, k)|^2\Big ) |{\mathcal Q}_{v+1}(\chi, k)|^2 \Big )^{1-k}.
\end{align*}
     Applying H\"older's inequality to the last expression above leads to the estimation given in \eqref{basiclowerbound2} and this completes the proof of the lemma.
\end{proof}

  In what follows, we may further assume that $0<k<1$ by noting that the case $k \geq 1$ of Theorem \ref{thmlowerbound} can be obtained using
  \eqref{basicboundkbig} in Lemma \ref{lem1} and applying the arguments in the paper. We then deduce from Lemma \ref{lem1} and Lemma \ref{lem2} that in order to prove Theorem \ref{thmlowerbound} and
   \ref{thmupperbound} for the case $0<k<1$, it suffices to establish the following three propositions.
\begin{proposition}
\label{Prop4} With notations as above, we have
\begin{align*}
\sumstar_{\substack{ \chi \shortmod q }}L(\tfrac{1}{2},\chi) \mathcal{N}(\overline{\chi}, k) \mathcal{N}(\chi, k-1) \gg \phis(q)(\log q)^{ k^2
} .
\end{align*}
\end{proposition}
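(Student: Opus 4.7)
The plan is to compute the twisted first moment directly: approximate $L(\tfrac{1}{2},\chi)$ by a Dirichlet polynomial, expand $\mathcal{N}(\overline{\chi},k)\mathcal{N}(\chi,k-1)$ as a double Dirichlet polynomial, apply orthogonality of characters modulo the prime $q$, and recognise the resulting diagonal as an Euler product of size $(\log q)^{k^2}$.

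First, via Mellin inversion one writes
\[
L(\tfrac{1}{2},\chi) = \sum_{n\ge 1}\frac{\chi(n)}{\sqrt{n}}\,W(n/q) + R(\chi),
\]
for a smooth weight $W$ with rapid decay beyond $n=q^{1+\varepsilon}$, where $R(\chi)$ comes from the functional equation and involves the root number together with a dual sum in $\overline{\chi}(n)$. Next, one expands
\[
\mathcal{N}(\overline{\chi},k)\mathcal{N}(\chi,k-1) = \sum_{m_1,m_2}\frac{b_k(m_1)\,b_{k-1}(m_2)}{\sqrt{m_1 m_2}}\,\overline{\chi}(m_1)\chi(m_2),
\]
where the coefficients $b_k,b_{k-1}$ are supported on integers whose prime factors lie in $\bigcup_j P_j$, with at most $\ell_j$ prime factors (counted with multiplicity) from each block $P_j$. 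By \eqref{sumoverell} this bounds $m_1 m_2\le q^{4/\ell_R}=q^{o(1)}$. Applying the orthogonality relation for primitive characters modulo a prime, namely $\sumstar_{\chi\bmod q}\chi(nm_2)\overline{\chi}(m_1) = (q-1)\mathbf{1}_{nm_2\equiv m_1\,(q),\,(nm_1m_2,q)=1} - \mathbf{1}_{(nm_1m_2,q)=1}$, the shifted solutions $nm_2 = m_1+\ell q$ with $\ell\neq 0$ contribute negligibly because $m_1$ is very small, so only the strict diagonal $nm_2 = m_1$ survives.

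Setting $m_1 = nm_2$ and using multiplicativity, the main term factors as an Euler product over $p\in\bigcup_j P_j$ whose local factor at $p$ (ignoring the truncation and the smooth weight) is
\[
S(p) = \sum_{b\ge c\ge 0}\frac{k^b(k-1)^c}{b!\,c!\,p^{b}} = 1 + \frac{k^2}{p} + O(1/p^2),
\]
where $c$ is the exponent of $p$ in $m_2$ and $b-c$ the exponent in $n$. By Lemma \ref{RS}, $\prod_{p\le q^{1/\ell_R^2}}S(p)\asymp_k (\log q)^{k^2}$, and combined with $(q-1)\asymp\phis(q)$ this yields the asserted lower bound.

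\textbf{Main obstacles.} Several error sources must be controlled without sacrificing the $(\log q)^{k^2}$ target: (i) the truncation to $E_{\ell_j}$ restricts the $b$-sum in $S(p)$, but Stirling's bound \eqref{Stirling} together with $\ell_j\gg\log\log q$ renders the tail exponentially small; (ii) the off-diagonal shifts $nm_2\equiv m_1\pmod q$ with $nm_2\neq m_1$ are suppressed by the tiny support of $m_1$; and (iii) the dual contribution $R(\chi)$ pairs with $\mathcal{N}(\overline{\chi},k)\mathcal{N}(\chi,k-1)$ to produce a different diagonal congruence $m_2\equiv nm_1\pmod q$ forcing $n\le q^{o(1)}$, a contribution to be bounded via Cauchy--Schwarz against the short twisted second moment computed elsewhere in the paper. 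The dual-sum estimation is expected to be the most technically subtle step, since it must carefully exploit Gauss-sum cancellation to achieve the required savings.
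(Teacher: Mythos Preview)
Your diagonal computation and Euler product analysis match the paper's argument exactly: expand both mollifiers, apply orthogonality modulo the prime $q$, discard shifted congruences, and evaluate the resulting local factors $1+k^2/p+O(1/p^2)$ via Lemma~\ref{RS}, with Rankin's trick controlling the truncation at $\ell_j$ primes per block.

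The one substantive divergence is your treatment of $L(\tfrac12,\chi)$. The paper does \emph{not} use an approximate functional equation here; it uses the elementary truncation
\[
L(\tfrac12,\chi)=\sum_{m\le X}\frac{\chi(m)}{\sqrt m}+O\!\Big(\frac{\sqrt q\log q}{\sqrt X}\Big)
\]
with $X=q^{1+1/10^{M-1}}$, so there is no dual sum and no root number at all. The error term is absorbed trivially using the crude pointwise bound $\mathcal N(\chi,k)\mathcal N(\chi,k-1)\ll q^{4/10^M}$.

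Your introduction of a dual term $R(\chi)$ is therefore an unnecessary complication, and your proposed handling of it is shaky. The dual sum in a balanced approximate functional equation is of the same order as the principal sum, so bounding its contribution by Cauchy--Schwarz against a twisted second moment would yield a quantity of size $\asymp\phis(q)(\log q)^{k^2}$, not $o(\phis(q)(\log q)^{k^2})$; that would destroy the lower bound. Moreover, once the Gauss sum $\tau(\chi)$ is present, summing over $\chi$ no longer produces a simple congruence $m_2\equiv nm_1\pmod q$ as you describe; one instead obtains additive characters $e(\overline{a}/q)$, and extracting cancellation from these is genuine extra work. All of this is avoided by the sharp-cutoff approximation the paper uses.
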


\begin{proposition}
\label{Prop5} With notations as above, we have
\begin{align*}
\max \Big (  \sumstar_{\substack{ \chi \shortmod q }}|L(\tfrac{1}{2},\chi)\mathcal{N}(\chi, k-1)|^2, \sumstar_{\substack{ \chi \shortmod q }}|L(\tfrac{1}{2},\chi)|^2 \sum^{R}_{v=0}\Big (\prod^v_{j=1}|\mathcal{N}_j(\chi, k-1)|^{2}\Big ) |{\mathcal
 Q}_{v+1}(\chi, k)|^2 \Big )   \ll
\phis(q)(\log q)^{ k^2 }.
\end{align*}
\end{proposition}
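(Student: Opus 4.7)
I plan to handle both quantities in the maximum by the same twisted second moment scheme. First, I apply Lemma \ref{PropDirpoly} to expand $|L(\tfrac{1}{2},\chi)|^2$ as a double Dirichlet sum over $a,b$ weighted by $\Wf_{\af}(\pi ab/q)$, and I expand each of the relevant factors $|\mathcal{N}(\chi, k-1)|^2$ and $\prod_{j \leq v}|\mathcal{N}_j(\chi, k-1)|^2|\mathcal{Q}_{v+1}(\chi, k)|^2$ as a Dirichlet polynomial $\sum_{m,n} y(m)\overline{y(n)}\chi(m)\overline{\chi(n)}/\sqrt{mn}$. Since $\mathcal{N}_j$ is a polynomial of degree $\leq \ell_j$ in $\mathcal{P}_j$, supported on primes at most $q^{1/\ell_j^2}$, and $\mathcal{Q}_{v+1}$ is a monomial of degree $r_k\ell_{v+1}$ in $\mathcal{P}_{v+1}$, the total length of the Dirichlet polynomial is bounded by $q^{2\sum_j 1/\ell_j + 2r_k/\ell_{v+1}} \leq q^{(4+2r_k)/\ell_R}$ via \eqref{sumoverell}, which is $q^{o(1)}$. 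Applying Lemma \ref{lem:sumoverevenodd} to the $\chi$-sum reduces the problem to the congruence $am \equiv \pm bn \pmod{q}$. The rapid decay of $\Wf_{\af}$ effectively restricts $ab \leq q^{1+\varepsilon}$, so $am,bn \leq q^{1+o(1)}$; off-diagonal solutions (congruent but unequal) are $q^{o(1)}$ in number and contribute negligibly.

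For the first quantity, the diagonal $am = bn$ reduces, via multiplicativity over the disjoint prime intervals $P_j$, to an Euler-type product in which each local factor on $P_j$ is an elementary polynomial in the prime. Applying Lemma \ref{RS} to evaluate the resulting prime sums yields the standard mollifier main term $\phis(q)(\log q)^{k^2}$, consistent with $\mathcal{N}(\chi, k-1)$ being a truncation of $L(\tfrac12,\chi)^{k-1}$ so that $|L\mathcal{N}|^2 \approx |L|^{2k}$ on average.

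For the second quantity I estimate each $v \in \{0,1,\ldots,R\}$ separately. The term $v = R$ coincides with the first quantity because $\mathcal{Q}_{R+1} = 1$. For $v < R$, I write $|\mathcal{Q}_{v+1}(\chi, k)|^2 = (12\max(1,k^2)/\ell_{v+1})^{2r_k\ell_{v+1}}|\mathcal{P}_{v+1}(\chi)|^{2r_k\ell_{v+1}}$; the same diagonal scheme then produces $\phis(q)(\log q)^{k^2}\delta_v$, with $\delta_v$ arising from the high moment of $\mathcal{P}_{v+1}$. The standard Gaussian-type diagonal estimate
\[
\mathrm{diag}\bigl(|\mathcal{P}_{v+1}(\chi)|^{2s}\bigr) \ll \frac{(2s)!}{2^s\,s!}\Bigl(\sum_{p\in P_{v+1}}\frac{1}{p}\Bigr)^{s}, \qquad s = r_k\ell_{v+1},
\]
combined with $\sum_{p\in P_{v+1}}1/p \ll \log\ell_v$ (Lemma \ref{RS}), $\ell_{v+1} \geq 2N\log\ell_v$, and \eqref{Stirling}, gives $\delta_v \ll (C_k/N)^{r_k\ell_{v+1}}$. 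Choosing $N$ large in terms of $k$ forces $\delta_v \ll e^{-\ell_{v+1}}$; since $\ell_j > \ell_{j+1}^2$ iterated yields $\ell_j \geq \ell_R^{2^{R-j}} \geq 10^{M\cdot 2^{R-j}}$, the series $\sum_{v=0}^{R-1}\delta_v$ is dominated by its largest term $e^{-\ell_R} \leq e^{-10^M}$, an absolute constant.

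The main obstacle will be the precise bookkeeping of the diagonal main term: verifying that the partial mollifier $\prod_{j\leq v}\mathcal{N}_j(\chi, k-1)$, together with the unmollified $|L|^2$-contribution from primes in $P_{v+1}\cup\cdots\cup P_R$ (and in the complement of $P_1\cup\cdots\cup P_R$), still reproduces the $(\log q)^{k^2}$ growth rather than some larger factor involving $\ell_v$; and confirming that the choice $r_k = \lceil 1+1/k\rceil+1$ really gives enough room in the moment bound for $\mathcal{Q}_{v+1}$ to deliver $\delta_v \ll e^{-\ell_{v+1}}$ for all $0 \leq v \leq R-1$ uniformly.
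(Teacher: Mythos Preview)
Your proposal is correct and follows essentially the same approach as the paper: expand $|L(\tfrac12,\chi)|^2$ via the approximate functional equation, apply the orthogonality relations of Lemma~\ref{lem:sumoverevenodd}, dispose of the off-diagonal congruence classes using the short length of the mollifier, and evaluate the diagonal $am=bn$ by factoring over the disjoint prime intervals $P_j$, with the $\mathcal{Q}_{v+1}$ piece supplying the saving $e^{-c\ell_{v+1}}$ that makes the sum over $v$ converge. The one imprecision is your phrase ``unmollified $|L|^2$-contribution from primes in $P_{v+1}\cup\cdots\cup P_R$'': there is no Euler product for $|L|^2$ here---the $\alpha$-sum simply contributes a factor $\asymp\log q$, and the discrepancy you must absorb is the missing Euler factor $\exp\bigl((1-k^2)\sum_{p\in P_{v+2}\cup\cdots\cup P_R}1/p\bigr)\le e^{\ell_{v+1}/N}$ from the partial mollifier, exactly as in the paper's inequality~\eqref{sumpbound1}.
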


\begin{proposition}
\label{Prop6} With notations as above, we have
\begin{align*}
\max \Big ( \sumstar_{\substack{ \chi \shortmod q }}\prod^R_{j=1}\big ( |{\mathcal N}_j(\chi, k)|^2+ |{\mathcal Q}_j(\chi,k)|^2 \big ),  \sumstar_{\substack{ \chi \shortmod q }} \sum^{R}_{v=0} \Big ( \prod^v_{j=1}|\mathcal{N}_j(\chi, k)|^{2}\Big )|{\mathcal
 Q}_{v+1}(\chi, k)|^2 \Big )   \ll \phis(q)(\log q)^{ k^2 }.
\end{align*}
\end{proposition}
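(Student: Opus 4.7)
The plan is to convert each quantity into a short Dirichlet polynomial evaluation and apply orthogonality of characters modulo the prime~$q$. By the multinomial theorem,
$$\mathcal{N}_j(\chi,k) = \sum_{\substack{\Omega(n)\le\ell_j \\ p\mid n\Rightarrow p\in P_j}} \frac{k^{\Omega(n)} b(n)}{\sqrt n}\chi(n), \qquad b(n):=\prod_p \frac{1}{\alpha_p!}\ \text{when}\ n=\prod_p p^{\alpha_p},$$
so $|\mathcal{N}_j(\chi,k)|^2$ is a Dirichlet polynomial of total length at most $q^{2/\ell_j}$; likewise $|\mathcal{Q}_j(\chi,k)|^2$ is a scalar multiple of $|\mathcal{P}_j(\chi)|^{2r_k\ell_j}$, of length at most $q^{2r_k/\ell_j}$. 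Since the sets $P_j$ are disjoint, a product $\prod_j A_j(\chi)$ with each $A_j\in\{|\mathcal{N}_j|^2,|\mathcal{Q}_j|^2\}$ has length at most $q^{2r_k\sum_j 1/\ell_j}\le q^{4r_k/\ell_R}$ by \eqref{sumoverell}, which is $\le q^{1/2}$ once $M$ is large enough. The orthogonality of characters modulo the prime~$q$ (subtracting the principal character) then yields
$$\sumstar_\chi \prod_j A_j(\chi) = \phis(q)\prod_j D_j^{A_j} + O(1),$$
where $D_j^{A_j}$ is the diagonal $m=n$ contribution from the factor $A_j$, the multiplicativity across $j$ being a consequence of the disjointness of the prime supports.

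Both diagonals are standard to bound. For $A_j=|\mathcal{N}_j|^2$,
$$D_j^{\mathcal N}\le \prod_{p\in P_j}\Big(1+\frac{k^2}{p}+O(1/p^2)\Big)=\exp\Big(k^2 S_j + O\Big(\textstyle\sum_{p\in P_j}p^{-2}\Big)\Big),\qquad S_j:=\sum_{p\in P_j}\frac 1p,$$
and Lemma~\ref{RS} gives $\sum_j S_j=\log\log q+O_k(1)$ (the deficit $-2\log\ell_R$ is absorbed into the $O_k(1)$ since $\ell_R$ depends on $k$ alone), so $\prod_j D_j^{\mathcal N}\ll(\log q)^{k^2}$. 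For $A_j=|\mathcal{Q}_j|^2$, the diagonal moment identity together with \eqref{Stirling} gives $\sumstar_\chi |\mathcal{P}_j(\chi)|^{2m}\le \phis(q)\,m!\,S_j^m$ for $m=r_k\ell_j$; combining with the elementary inequality $S_j/\ell_j\le 1/(2N)+o(1)$ (from $S_1\asymp \log\log q$, $\ell_1\asymp N\log\log q$ for $j=1$, and $S_j\ll \log\ell_{j-1}$, $\ell_j\ge 2N\log\ell_{j-1}$ for $j\ge 2$) yields
$$D_j^{\mathcal Q}\ll (C_k/N)^{r_k\ell_j}\ll e^{-\ell_j}$$
for $N$ chosen sufficiently large in terms of $k$.

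To assemble the first bound, expand $\prod_j(|\mathcal{N}_j|^2+|\mathcal{Q}_j|^2)$ into $2^R$ terms and apply the above; using $D_j^{\mathcal N}\ge 1$ together with the bounds above gives
$$\prod_j(D_j^{\mathcal N}+D_j^{\mathcal Q})\le \prod_j D_j^{\mathcal N}\prod_j(1+e^{-\ell_j})\ll (\log q)^{k^2}.$$
For the second bound, the $v=R$ summand (where $\mathcal{Q}_{R+1}=1$) supplies the main term $\ll \phis(q)(\log q)^{k^2}$, whereas for $v<R$ the extra factor $D_{v+1}^{\mathcal Q}\ll e^{-\ell_{v+1}}$ makes the sum over $v<R$ at most $\phis(q)(\log q)^{k^2}\sum_j e^{-\ell_j}\ll \phis(q)(\log q)^{k^2}$. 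The main technical point is to verify that the off-diagonal error from orthogonality remains negligible uniformly across the $2^R$ (respectively $R+1$) terms of the expansion; this is secured precisely by the length estimate $q^{4r_k/\ell_R}\ll q^{1/2}$ coming from \eqref{sumoverell}, together with the trivial bound on the coefficients of the Dirichlet polynomials.
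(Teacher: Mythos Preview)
Your argument is correct and follows the same route as the paper: orthogonality modulo $q$ reduces to diagonal terms, which factor over $j$ by disjointness of the $P_j$, and the bounds $D_j^{\mathcal N}\ll \exp(k^2 S_j)$ and $D_j^{\mathcal Q}\ll e^{-\ell_j}$ (via Stirling and $S_j/\ell_j \ll 1/N$) are combined exactly as you describe. One small point: your displayed ``$+\,O(1)$'' for the error is not literally correct (the principal-character contribution can be a small power of $q$), and the paper sidesteps this entirely by using positivity to pass from $\sumstar_\chi$ to the full sum $\sum_\chi$, after which orthogonality gives the diagonal \emph{exactly} with no error term---you could do the same and dispense with your final paragraph.
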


   We shall prove the above propositions in the rest of the paper.

\section{Proof of Proposition \ref{Prop4}}
\label{sec 4}

    Denote $\Omega(n)$ for the number of distinct prime powers dividing $n$ and $w(n)$ for the multiplicative function such that
    $w(p^{\alpha}) = \alpha!$ for prime powers $p^{\alpha}$.  Let $b_j(n), 1 \leq j \leq R$ be functions such that $b_j(n)=1$ when $n$ is
    composed of at most $\ell_j$ primes, all from the interval $P_j$. Otherwise, we define $b_j(n)=0$. We use these notations to see that for
    any real number $\alpha$,
\begin{equation}
\label{5.1}
{\mathcal N}_j(\chi, \alpha) = \sum_{n_j} \frac{1}{\sqrt{n_j}} \frac{\alpha^{\Omega(n_j)}}{w(n_j)}  b_j(n_j) \chi(n_j), \quad 1\le j\le R.
\end{equation}
    Note that each ${\mathcal N}_j(\chi, \alpha)$ is a short Dirichlet polynomial since $b_j(n_j)=0$ unless $n_j \leq
    (q^{1/\ell_j^2})^{\ell_j}=q^{1/\ell_j}$. It follows from this that ${\mathcal N}(\chi, k)$ and ${\mathcal N}(\chi, k-1)$ are short Dirichlet
    polynomials whose lengths are both at most $q^{1/\ell_1+ \ldots +1/\ell_R} < q^{2/10^{M}}$ by \eqref{sumoverell}. Moreover, it is readily
    checked that we have for each $\chi$ modulo $q$ (including the case $\chi=\chi_0$, the principal character modulo $q$),
\begin{align}
\label{prodNbound}
 & {\mathcal N}(\chi, k){\mathcal N}(\chi, k-1) \ll q^{2(1/\ell_1+ \ldots +1/\ell_R)} < q^{4/10^{M}}.
\end{align}

    We note further that it is shown in \cite{R&Sound} that for $X \geq 1$,
\begin{align*}
 & L(\half, \chi)=\sum_{m \leq X}\frac {\chi(m)}{\sqrt{m}}+O(\frac {\sqrt{q}\log q}{\sqrt{X}}).
\end{align*}
  We deduce from the above that
\begin{align*}
& \sumstar_{\substack{ \chi \shortmod q }}L(\tfrac{1}{2},\chi) \mathcal{N}(\overline{\chi}, k) \mathcal{N}(\chi, k-1) \\
= & \sumstar_{\substack{ \chi \shortmod q }}\sum_{m \leq X}\frac {\chi(m)}{\sqrt{m}}\mathcal{N}(\overline{\chi}, k) \mathcal{N}(\chi,
k-1)+O(\frac {\sqrt{q}\log q}{\sqrt{X}}\sumstar_{\substack{ \chi \shortmod q }}\mathcal{N}(\overline{\chi}, k) \mathcal{N}(\chi, k-1)) \\
=&  \sumstar_{\substack{ \chi \shortmod q }}\sum_{m \leq X}\frac {\chi(m)}{\sqrt{m}}\mathcal{N}(\overline{\chi}, k) \mathcal{N}(\chi,
k-1)+O(\frac {\phis(q)q^{1/2+4/10^{M}}\log q}{\sqrt{X}}),
\end{align*}
  where the last estimation above follows from \eqref{prodNbound}.  Applying \eqref{prodNbound} one more time, we see that the main term above
  equals
\begin{align*}
 & \sum_{\substack{ \chi \shortmod q }}\sum_{m \leq X}\frac {\chi(m)}{\sqrt{m}}\mathcal{N}(\overline{\chi}, k) \mathcal{N}(\chi,
 k-1)+O(\sqrt{X}q^{4/10^{M}}) \\
=& \phis(q) \sum_{a} \sum_{b} \sum_{\substack{n \leq X \\ an \equiv b \bmod q}}\frac {x_a y_b}{\sqrt{abn}}+O(\sqrt{X}q^{4/10^{M}}),
\end{align*}
  where we write for simplicity
\begin{align*}
 {\mathcal N}(\chi, k-1)= \sum_{a  \leq q^{2/10^{M}}} \frac{x_a}{\sqrt{a}} \chi(a), \quad \mathcal{N}(\overline{\chi}, k) = \sum_{b  \leq
 q^{2/10^{M}}} \frac{y_b}{\sqrt{b}}\overline{\chi}(b).
\end{align*}
  We now consider the contribution from the terms $am=b+l q$ with $l \geq 1$ above (note that as $b <q$, we can not have $b > am$ in our
  case). As this implies that $l \leq  q^{2/10^{M}}X/q$, we deduce together with the observation that $x_a, y_b \ll 1$ that the total
  contribution from these terms is
\begin{align*}
 \ll & \phi(q)  \sum_{b  \leq q^{2/10^{M}}}  \sum_{l \leq q^{2/10^{M}}X/q}\frac {1}{\sqrt{bql}} \ll \sqrt{X}q^{2/10^{M}}.
\end{align*}

   We now set $X=q^{1+1/10^{M-1}}$ to see that we can ignore the contributions from various error terms above to deduce that
\begin{align*}
& \sumstar_{\substack{ \chi \shortmod q }}L(\tfrac{1}{2},\chi) \mathcal{N}(\overline{\chi}, k) \mathcal{N}(\chi, k-1)
\gg \phis(q) \sum_{a} \sum_{b} \sum_{\substack{m \leq X \\ am = b }}\frac {x_a y_b}{\sqrt{abm}}
= \phis(q) \sum_{b} \frac {y_b}{b} \sum_{\substack{a, m \\ am = b }}x_a=\phis(q) \sum_{b} \frac {y_b}{b} \sum_{\substack{a | b }}x_a,
\end{align*}
  where the last equality above follows from the observation that $b \leq q^{2/10^{M}}<X$.

   Notice that
\begin{align}
\label{sumab}
\begin{split}
\sum_{b} \frac {y_b}{b} \sum_{\substack{a | b }}x_a=& \prod^R_{j=1}\Big ( \sum_{n_j} \frac{1}{n_j} \frac{k^{\Omega(n_j)}}{w(n_j)}
b_j(n_j)\sum_{n'_j|n_j} \frac{(k-1)^{\Omega(n'_j)}}{w(n'_j)}  b_j(n'_j) \Big ) \\
=& \prod^R_{j=1}\Big ( \sum_{n_j} \frac{1}{n_j} \frac{k^{\Omega(n_j)}}{w(n_j)}  b_j(n_j)\sum_{n'_j|n_j}  \frac{(k-1)^{\Omega(n'_j)}}{w(n'_j)}
\Big ),
\end{split}
\end{align}
  where the last equality above follows by noting that $b_j(n_j)=1$ implies that $b_j(n'_j)=1$ for all $n'_j|n_j$.

  We consider the sum above over $n_j$ for a fixed $1 \leq j \leq R$ in \eqref{sumab}. Note that the factor $b_j(n_j)$ restricts $n_j$ to have
  all prime factors in $P_j$ such that $\Omega(n_j) \leq \ell_j$. If we remove the restriction on $\Omega(n_j)$, then the sum becomes
\begin{align}
\label{6.02}
\begin{split}
& \prod_{\substack{p\in P_j }} \Big( \sum_{i=0}^{\infty} \frac{1}{p^i} \frac{k^{i}}{i!}\Big ( \sum_{l=0}^{i}
\frac{(k-1)^{l}}{l!} \Big ) \Big)
= \prod_{\substack{p\in P_j }}\Big (1+ \frac {k^2}p+O(\frac 1{p^2}) \Big ).
\end{split}
\end{align}

   On the other hand, using Rankin's trick by noticing that $2^{\Omega(n_j)-\ell_j}\ge 1$ if $\Omega(n_j) > \ell_j$,  we see that the error
   introduced this way does not exceed
\begin{align*}
\begin{split}
 & \Big ( \sum_{n_j} \frac{1}{n_j} \frac{k^{\Omega(n_j)}}{w(n_j)}2^{\Omega(n_j)-\ell_j}  \sum_{n'_j|n_j}  \frac{(1-k)^{\Omega(n'_j)}}{w(n'_j)}
 \Big ) \\
\le & 2^{-\ell_j} \prod_{\substack{p\in P_j }} \Big( \sum_{i=0}^{\infty} \frac{1}{p^i} \frac{(2k)^{i}}{i!}\Big ( \sum_{l=0}^{i}
\frac{(1-k)^{l}}{l!} \Big ) \Big) \\
\le & 2^{-\ell_j} \prod_{\substack{p\in P_j }} \Big( 1+ \frac {2k(2-2k)}p+O(\frac 1{p^2})\Big) \\
\leq & 2^{-\ell_j/2}\prod_{\substack{p\in P_j }}\Big (1+ \frac {k^2}p+O(\frac 1{p^2}) \Big ),
\end{split}
\end{align*}
  where the last estimation above follows by taking $N$ large enough so that we deduce from Lemma \ref{RS} that
\begin{align}
\label{boundsforsumoverp}
   \sum_{p \in P_j}\frac 1{p} \leq \frac 1N \ell_j.
\end{align}

  We then deduce from this, \eqref{6.02} and Lemma \ref{RS} that we have
\begin{align*}
& \sumstar_{\substack{ \chi \shortmod q }}L(\tfrac{1}{2},\chi) \mathcal{N}(\overline{\chi}, k) \mathcal{N}(\chi, k-1)
\gg \phis(q) \prod^R_{j=1}\Big (1+O( 2^{-\ell_j/2})\Big )\prod_{\substack{p\in P_j }}\Big (1+ \frac {k^2}p+O(\frac 1{p^2}) \Big ) \gg
\phis(q)(\log q)^{k^2}.
\end{align*}
 This completes the proof of the proposition.

\section{Proof of Proposition \ref{Prop5}}
\label{sec 5}

  As the sum over $e^{-\ell_j/2}$ converges, we see that it suffices to show that for a fixed integer $v$ such that $1 \leq v \leq R-1$, we have
\begin{align*}
\begin{split}
  \sumstar_{\substack{ \chi \shortmod q }}|L(\tfrac{1}{2},\chi)|^2 \Big (\prod^v_{j=1}|\mathcal{N}_j(\chi, k-1)|^2 \Big )|{\mathcal
 Q}_{v+1}(\chi, k)|^{2} \ll \phis(q)e^{-\ell_{v+1}/2}(\log q)^{k^2}.
\end{split}
\end{align*}

   We define the function $p_{v+1}(n)$ such that $p_{v+1}(n)=0$ or $1$, and we have $p_{v+1}(n)=1$ if and only if $n$ is composed of exactly $r_k\ell_{v+1}$ primes (counted with multiplicity), all from the interval $P_{v+1}$. We use this together with the notations in Section \ref{sec 4} to write that
\begin{align}
\label{Pexpression}
  {\mathcal P}_{v+1}(\chi)^{r_k\ell_{v+1}} =&  \sum_{ \substack{ n_{v+1}}} \frac{1}{\sqrt{n_{v+1}}}\frac{(r_k\ell_{v+1})!
  }{w(n_{v+1})}\chi(n_{v+1})p_{v+1}(n_{v+1}),
\end{align}
  where we recall that $r_k=\lceil 1+1/k \rceil+1$ when $0<k<1$. We then apply the above to write
$$ \Big (\prod^v_{j=1}|\mathcal{N}_j(\chi, k-1)|^2 \Big )|{\mathcal
 Q}_{v+1}(\chi, k)|^{2} = \Big( \frac{12  }{\ell_{v+1}}\Big)^{2r_k\ell_{v+1}}((r_k\ell_{v+1})!)^2 \sum_{a,b \leq q^{2r_k/10^{M}}} \frac{u_a u_b}{\sqrt{ab}}\chi(a)\overline{\chi}(b).$$
 Here we observe that $\prod^v_{j=1}|\mathcal{N}_j(\chi, k-1)| \cdot |{\mathcal
 Q}_{v+1}(\chi, k)|$ is a short Dirichlet polynomial whose length is at most
$$  q^{1/\ell_1+ \ldots +1/\ell_v+r_k/\ell_{v+1}} < q^{2r_k/10^{M}}.$$
Note also that we have $u_a, u_b \leq 1$ for all $a, b$.

 We evaluate the sum of $\Big (\prod^v_{j=1}|\mathcal{N}_j(\chi, k-1)|^2 \Big )|{\mathcal
 Q}_{v+1}(\chi, k)|^{2}$ over all primitive characters over $q$ by splitting the sum into
 sums over even and odd characters separately. As the treatments are similar, we only consider the sum over even characters here.  From
 \eqref{lsquareapprox} and Lemma \ref{lem:sumoverevenodd}, we have that
\begin{align}
\label{LNsquaresum}
\begin{split}
& {\sum_{\chi}}^{(e)} |L(1/2, \chi)|^2\Big (\prod^v_{j=1}|\mathcal{N}_j(\chi, k-1)|^2 \Big )|{\mathcal
 Q}_{v+1}(\chi, k)|^{2} \\
=& 2\Big( \frac{12  }{\ell_{v+1}}\Big)^{2r_k\ell_{v+1}}((r_k\ell_{v+1})!)^2 \sum_{a,b \leq q^{2r_k/10^{M}}} \frac{u_a u_b}{\sqrt{ab}} \sum_{m, n} \frac{1}{\sqrt{mn}} \Wf_{\af} \left(\frac {\pi mn}{q}\right)
{\sum_{\chi}}^{(e)} \chi(ma) \overline{\chi}(nb) \\
=& \phis(q) \Big( \frac{12  }{\ell_{v+1}}\Big)^{2r_k\ell_{v+1}}((r_k\ell_{v+1})!)^2 \sum_{a,b \leq q^{2r_k/10^{M}}} \frac{x_a x_b}{\sqrt{ab}}  \sum_{\substack{m,n \\ (mn, q)=1 \\ ma \equiv \pm nb \,{\rm mod} \, q}}
\frac{1}{\sqrt{mn}} \Wf_{\af} \left(\frac {\pi mn}{q}\right) \\
& + O\left( \Big( \frac{12  }{\ell_{v+1}}\Big)^{2r_k\ell_{v+1}}((r_k\ell_{v+1})!)^2\sum_{a,b \leq q^{2r_k/10^{M}}} \frac{1}{\sqrt{ab}}  \sum_{m,n}
\frac{1}{\sqrt{mn}} \Wf_{\af} \left(\frac {\pi mn}{q}\right)  \right).
\end{split}
\end{align}

 We now apply \eqref{Stirling} and the definition of $\ell_{v+1}$ to see that
\begin{align}
\label{factorbound}
 & \Big( \frac{12 }{\ell_{v+1}} \Big)^{2r_k\ell_{v+1}}((r_k \ell_{v+1})!)^2
\leq (r_k\ell_{v+1})^2 \Big( \frac{12 r_k }{e } \Big)^{2r_k\ell_{v+1}} \ll q^{\varepsilon}.
\end{align}

   It follows from this that the error term in \eqref{LNsquaresum} is
$$ \ll q^{2r_k/10^{M}+\varepsilon} \sum_{d} \frac{d^{\epsilon}}{\sqrt{d}} \Wf_{\af}\left(\frac {\pi d}{q}\right)\ll q^{1-\varepsilon}.$$

We next estimate the contribution of the terms $ma \neq nb$ in the last expression of \eqref{LNsquaresum}. By the rapid decay of $\Wf_{\af}$
given in Lemma \ref{PropDirpoly}, we may assume that $mn \leq q^{1 + \varepsilon}$. Using $\Wf_{\af} \left(\frac {\pi mn}{q}\right) \ll 1$ and \eqref{factorbound}, we see that these terms contribute
$$ \ll q^{1+\varepsilon} \sum_{a,b \leq q^{2r_k/10^{M}}} \frac{1}{\sqrt{ab}} \sum_{\substack{m, n \\ mn \leq q^{1 + \varepsilon} \\ q | ma \pm nb \\ ma \pm nb \neq
0 }} \frac{1}{\sqrt{mn}}.$$

  To estimate the expression above, we may consider the case $q|ma+nb$ without loss of generality. We may also assume that $ma \geq nb$
  so that on writing $ma+nb=ql$, we have that $ql \leq 2ma \leq 2q^{1 + \varepsilon}q^{2r_k/10^{M}}$ which implies that $l \leq
  2q^{2r_k/10^{M}+\varepsilon}$. Moreover, we have that $1/\sqrt{ma} \ll 1/\sqrt{ql}$ and that $mn \leq q^{1 + \varepsilon}$ implies that  $mnab
  \leq q^{1 + \varepsilon}ab \leq q^{1 + \varepsilon}q^{4r_k/10^{M}}$, so that we have $n \leq nb \leq q^{1/2 + \varepsilon}q^{2r_k/10^{M}}$. It
  follows that
\begin{align*}
  & q^{1+\varepsilon} \sum_{a,b \leq q^{2r_k/10^{M}}} \frac{1}{\sqrt{ab}} \sum_{\substack{m, n \\ mn \leq q^{1 + \varepsilon} \\ q | ma + nb \\ ma + nb \neq 0 \\
  ma \geq nb}} \frac{1}{\sqrt{mn}} \ll  q^{1+\varepsilon} \sum_{b \leq q^{2r_k/10^{M}}} \frac{1}{\sqrt{b}} \sum_{\substack{n \\ n  \leq q^{1/2 +
  \varepsilon}q^{2r_k/10^{M}}}} \frac{1}{\sqrt{n}}\sum_{\substack{l \leq 2q^{2r_k/10^{M}+\varepsilon}}} \frac{1}{\sqrt{ql}}  \ll q^{1-\varepsilon}.
\end{align*}

 It remains to consider the terms $ma = nb$ in the last expression of \eqref{LNsquaresum}. We write $m = \frac{\alpha b}{(a,b)}, n =
 \frac{\alpha a}{(a,b)}$ and apply \eqref{factorbound} to see that these terms are
\begin{align}
\label{mainterm}
\ll & \phis(q)(r_k\ell_{v+1})^2 \Big( \frac{12 r_k }{e } \Big)^{2r_k\ell_{v+1}} \sum_{a,b \leq q^{2r_k/10^{M}}} \frac{(a,b)}{ab} u_a u_b  \sum_{(\alpha, q)=1} \frac{1}{\alpha} \Wf_{\af} \left(\frac{\pi\alpha^2
ab}{q(a,b)^2}\right).
\end{align}

  To evaluate the last sum above, we set $X=q(a,b)^2/(\pi ab)$ and apply the definition of $\Wf_{\af}(x)$ given in Lemma \ref{PropDirpoly} to
  see that
\begin{align*}
& \sum_{(\alpha, q)=1} \frac{1}{\alpha} \Wf_{\af} \left(\frac{\alpha^2}{X}\right)= \frac{1}{2\pi i} \int\limits_{(c)}
\frac{\Gamma\left(\frac{1}{4} + \frac{s + \af}{2}\right)^2}{\Gamma\left(\frac{1}{4} + \frac{\af}{2} \right)^2 }\zeta(1+2s) (1-q^{-1-2s})X^{s}
\frac{ds}{s}.
\end{align*}
  We evaluate the integral above by shifting the line of integration to $\Re(s)=-1/4+\varepsilon$. We encounter a double pole at $s=0$ in the
  process. The integration on the new line can be estimated trivially using the convexity bound for $\zeta(s)$  (see \cite[Exercise 3, p.
  100]{iwakow}) as
\begin{align*}
\begin{split}
  \zeta(s) \ll & \left( 1+|s| \right)^{\frac {1-\Re(s)}{2}+\varepsilon}, \quad 0 \leq \Re(s) \leq 1,
\end{split}
\end{align*}
   and the rapid decay of $\Gamma(s)$ when $|\Im(s)| \rightarrow \infty$. We also evaluate the corresponding residue to see that
\begin{align}
\label{alphasum}
& \sum_{(\alpha, q)=1} \frac{1}{\alpha} \Wf_{\af} \left(\frac{\alpha^2}{X}\right)= C_1(q)\log X+C_2(q)+O(X^{-1/4+\varepsilon}),
\end{align}
   where $C_1(q), C_2(q)$ are some constants depending on $q$, satisfying $C_1(q), C_2(q) \ll 1$.

  We apply \eqref{alphasum} to evaluate \eqref{mainterm} to see that we may ignore the contribution of the error term from \eqref{alphasum} so
  that the expression in \eqref{mainterm} is
\begin{align*}
& \ll \phis(q) (r_k\ell_{v+1})^2 \Big( \frac{12 r_k }{e } \Big)^{2r_k\ell_{v+1}} \sum_{a,b \leq q^{2r_k/10^{M}}} \frac{(a,b)}{ab} u_a u_b \Big ( C_1(q)( \log q+2 \log (a, b)-\log a -\log b-\log \pi)+C_2(q) \Big ).
\end{align*}

  As the estimations are similar, it suffices to give an estimation on the sum
\begin{align}
\label{sumoverlog}
\begin{split}
& \phis(q)(r_k\ell_{v+1})^2 \Big( \frac{12 r_k }{e } \Big)^{2r_k\ell_{v+1}} \sum_{a,b } \frac{(a,b)}{ab} u_a u_b \log a \\
=& \phis(q)(r_k\ell_{v+1})^2 \Big( \frac{12 r_k }{e } \Big)^{2r_k\ell_{v+1}} \sum_{p \in \bigcup^{v+1}_{j=1} P_j}\sum_{l_1 \geq 1, l_2 \geq 0}\frac {l_1 \log p}{p^{l_1+l_2-\min (l_1, l_2)}}\frac {(k-1)^{l_1+l_2}}{l_1!l_2!}\sum_{\substack{ a,b \\ (ab, p)=1} } \frac{(a,b) u_{p^{l_1}a} u_{p^{l_2}b}}{ab} .
\end{split}
\end{align}
   We now estimate the sum last sum above for fixed $p=p_1, l_1, l_2$. Without loss of generality, we may assume that $p=p_1 \in P_1$. We then define for $(n_1n'_1, p_1)=1$,
\begin{equation*}
 v_{n_1} = \frac{1}{n_1} \frac{(k-1)^{\Omega(n_1)}}{w(n_1)}  b_1(n_1p_1^{l_1}), \quad v_{n'_1} = \frac{1}{n'_1} \frac{(k-1)^{\Omega(n'_1)}}{w(n'_1)}  b_1(n'_1p_1^{l_2}).
\end{equation*}
  For $2 \leq j \leq v$,
\begin{equation*}
 v_{n_j} = \frac{1}{n_j} \frac{(k-1)^{\Omega(n_j)}}{w(n_j)}  b_j(n_j), \quad  v_{n'_j} = \frac{1}{n'_j} \frac{(k-1)^{\Omega(n'_j)}}{w(n'_j)}  b_j(n'_j).
\end{equation*}
  Also,
\begin{equation*}
 v_{n_{v+1}} = \frac{1}{n_{v+1}} \frac{1}{w(n_{v+1})}  p_{v+1}(n_{v+1}), \quad  v_{n'_{v+1}} = \frac{1}{n'_{v+1}} \frac{1}{w(n'_{v+1})}  p_{v+1}(n'_{v+1}).
\end{equation*}

  Then one checks that
\begin{align}
\label{doublesum}
 \sum_{\substack{ a,b \\ (ab, p_1)=1} } \frac{(a,b) u_{p^{l_1}a} u_{p^{l_2}b}}{ab}=\prod^{v+1}_{j=1}\Big ( \sum_{\substack{n_j, n'_j \\ (n_1n'_1, p_1)=1}}(n_j, n'_j)v_{n_j}v_{n'_j} \Big ).
\end{align}

   As in the proof of Proposition \ref{Prop4}, when $\max (l_1, l_2) \leq \ell_1/2$, we remove the restriction of $b_1(n_1)$ on $\Omega_1(n_1)$ and $b_1(n'_1)$ on $\Omega_1(n'_1)$ to see that the last sum in \eqref{doublesum} becomes
\begin{align*}
  & \sum_{\substack{n_1, n'_1\\ (n_1n'_1, p_1)=1}}(n_1, n'_1)\frac{1}{n_1} \frac{(k-1)^{\Omega(n_1)}}{w(n_1)} \frac{1}{n'_1} \frac{(k-1)^{\Omega(n'_1)}}{w(n'_1)} =
  \prod_{\substack{p \in P_1 \\ p \neq p_1}}\Big ( 1+\frac{2(k-1)+(k-1)^2}{p} +O(\frac 1{p^2})  \Big ) \\
\ll &  \exp (\sum_{p \in P_1}\frac{k^2-1}{p}+O(\sum_{p \in P_1}\frac 1{p^2})).
\end{align*}

  Further, we notice that in this case we have $2^{\Omega(n)-\ell_1/2}\ge 1$ if $\Omega(n)+\max (l_1, l_2) \geq \ell_1$.  Thus, we apply Rankin's
  trick to see that the error introduced this way is
\begin{align*}
 \ll &  2^{-\ell_1/2} \sum_{n_1, n'_1}\frac {(n_1, n'_1)}{n_1n'_1} \frac{(1-k)^{\Omega(n_1)}}{w(n_1)} \frac{(1-k)^{\Omega(n'_1)}2^{\Omega(n'_1)}}{w(n'_1)}
 =2^{-\ell_1/2} \prod_{p \in P_1}\Big ( 1+\frac{3(1-k)+2(1-k)^2}{p} +O(\frac 1{p^2})  \Big ).
\end{align*}
   We may take $N$ large enough so that it follows from \eqref{boundsforsumoverp} that when $\max (l_1, l_2) \leq \ell_1/2$, the error is
\begin{align*}
 \ll &  2^{-\ell_1/4} \exp (\sum_{p \in P_1}\frac{k^2-1}{p}+O(\sum_{p \in P_1}\frac 1{p^2})).
\end{align*}

   On the other hand, when $\max (l_1, l_2) > \ell_1/2$, we apply \eqref{boundsforsumoverp} again to see that
\begin{align*}
 &  \sum_{\substack{n_1, n'_1 \\ (n_1n'_1, p_1)=1}}(n_1, n'_1)v_{n_1}v_{n'_1} \ll  \sum_{\substack{n_1, n'_1 \\ (n_1n'_1, p_1)=1}}\frac{(n_1, n'_1)}{n_1n'_1}
 \frac{(1-k)^{\Omega(n_1)}}{w(n_1)}  \frac{(1-k)^{\Omega(n'_1)}}{w(n'_1)} \ll 2^{\ell_1/6}  \exp (\sum_{p \in P_1}\frac{k^2-1}{p}+O(\sum_{p \in P_1}\frac
 1{p^2})).
\end{align*}
   We then deduce that in this case
\begin{align*}
 &  \frac {l_1 \log p_1}{p^{l_1+l_2-\min (l_1, l_2)}_1}\frac {(k-1)^{l_1+l_2}}{l_1!l_2!}\sum_{\substack{n_1, n'_1 \\ (n_1n'_1, p_1)=1}}(n_1, n'_1)v_{n_1}v_{n'_1} \ll \frac {l_1
 \log p_1}{p^{\max (l_1, l_2)}_1} \frac {1}{l_1!l_2!} 2^{\ell_1/6}  \exp (\sum_{p \in P_1}\frac{k^2-1}{p}+O(\sum_{p \in P_1}\frac 1{p^2})) \\
 \ll & \frac {l_1 \log p_1}{p^{l_1/2+\max (l_1, l_2)/2}_1} \frac {1}{l_1!l_2!} 2^{\ell_1/6} \exp (\sum_{p \in P_1}\frac{k^2-1}{p}+O(\sum_{p \in
 P_1}\frac 1{p^2})) \\
 \ll & \frac {l_1 \log p_1}{p^{l_1/2}_1}\frac {1}{l_1!l_2!} 2^{-\ell_1/12}   \exp (\sum_{p \in  P_1}\frac{k^2-1}{p_1}+O(\sum_{p \in  P_1}\frac 1{p^2})).
\end{align*}

   Similar estimations carry over to the sums over $n_j, n'_j$ for $2 \leq j \leq v$ in \eqref{doublesum}. To treat the sum over $n_{v+1}, n'_{v+1}$,  we apply Rankin's trick again to see that the sum is
\begin{align*}
\ \ll &  (12r_k)^{-2r_k\ell_{v+1}} \sum_{\substack{n_{v+1}, n'_{v+1} \\ p |
n_{v+1}n'_{v+1} \implies  p\in P_{v+1} } }\frac {(n_{v+1}, n'_{v+1})}{n_{v+1}n'_{v+1}} \frac{(12r_k)^{\Omega(n_{v+1})}}{w(n_{v+1})} \frac{A^{\Omega(n'_{v+1})}}{w(n'_{v+1})}.
\end{align*}
  By taking $N$ large enough, we deduce from this that
\begin{align*}
\begin{split}
&  (r_k\ell_{v+1})^2 \Big( \frac{12 r_k }{e } \Big)^{2r_k\ell_{v+1}} \sum_{\substack{n_{v+1}, n'_{v+1} \\ (n_{v+1}n'_{v+1}, p_1)=1}}(n_{v+1}, n'_{v+1})v_{n_{v+1}}v_{n'_{v+1}}
\ll  e^{-\ell_{v+1}}\exp (\sum_{p \in P_{v+1}}\frac{k^2-1}{p}+O(\sum_{p \in P_{v+1}}\frac 1{p^2})).
\end{split}
\end{align*}

   It follows from the above discussions that we have
\begin{align}
\label{sumpbound}
\begin{split}
&  (r_k\ell_{v+1})^2 \Big( \frac{12 r_k }{e } \Big)^{2r_k\ell_{v+1}} \sum_{l_1 \geq 1, l_2 \geq 0}\frac {l_1 \log p_1}{p^{l_1+l_2-\min (l_1, l_2)}_1}\frac {(k-1)^{l_1+l_2}}{l_1!l_2!}\sum_{\substack{ a,b \\ (ab, p_1)=1} } \frac{u_{p^{l_1}_1a}
u_{p^{l_2}_1b}}{ab} \\
 \ll & e^{-\ell_{v+1}}\prod^v_{j=1}\big(1+O(2^{-\ell_j/12})\big)\exp (\sum_{p \in \bigcup^{v+1}_{j=1} P_j}\frac{k^2-1}{p}+O(\sum_{p \in \bigcup^{v+1}_{j=1} P_j}\frac 1{p^2})) \times \Big ( \frac{\log p_1}{p_1} + O(\frac
 {\log p_1}{p^2_1}) \Big ).
\end{split}
\end{align}

  We now apply \eqref{boundsforsumoverp} and the observation $\ell_j > \ell^2_{j+1}>2\ell_{j+1}$ to see that
\begin{align}
\label{sumpbound1}
 \sum_{p \in \bigcup^R_{j=v+2} P_j}\frac{1-k^2}{p} \leq \sum^{R}_{j=v+2}\sum_{p \in P_j}\frac 1{p} \leq \frac 1N \sum^{R}_{j=v+2} \ell_j \leq \frac {\ell_{v+2}}{N} \sum^{\infty}_{j=0} \frac 1{2^j} \leq \frac {\ell_{v+1}}{N}.
\end{align}

   It follows from \eqref{sumpbound1} that the last expression in \eqref{sumpbound} is
\begin{align*}
\begin{split}
 \ll &  e^{-\ell_{v+1}/2}\exp (\sum_{p \in \bigcup^{R}_{j=1} P_j}\frac{k^2-1}{p}+O(\sum_{p \in \bigcup^{R}_{j=1} P_j}\frac 1{p^2})) \times \Big ( \frac{\log p_1}{p_1} + O(\frac
 {\log p_1}{p^2_1}) \Big ).
\end{split}
\end{align*}

 We then conclude from this, \eqref{sumoverlog} and Lemma \ref{RS} that
\begin{align*}
\begin{split}
& {\sum_{\chi}}^{(e)} |L(1/2, \chi)|^2\Big (\prod^v_{j=1}|\mathcal{N}_j(\chi, k-1)|^2 \Big )|{\mathcal
 Q}_{v+1}(\chi, k)|^{2} \\
\ll &    \phis(q) e^{-\ell_{v+1}/2}\exp (\sum_{p \in \bigcup^{R}_{j=1} P_j}\frac{k^2-1}{p}+O(\sum_{p \in \bigcup^{R}_{j=1} P_j}\frac 1{p^2})) \times \sum_{p \in \bigcup^{v+1}_{j=1}P_j} \Big ( \frac{\log p}{p} + O(\frac
 {\log p}{p^2}) \Big ) \ll \phis(q) e^{-\ell_{v+1}/2}(\log q)^{k^2}.
\end{split}
\end{align*}

   This completes the proof of the proposition.

\section{Proof of Proposition \ref{Prop6}}

   As the proofs are similar, we shall only prove here that
\begin{align*}
  \sumstar_{\substack{ \chi \shortmod q }}\prod^R_{j=1}\big ( |{\mathcal N}_j(\chi, k)|^2+ |{\mathcal Q}_j(\chi,k)|^2 \big ) \ll \phis(q)(\log q)^{ k^2 }.
\end{align*}

   We first note that
\begin{align}
\label{upperboundprodofN}
\begin{split}
 & \sumstar_{\substack{ \chi \shortmod q }}\prod^R_{j=1}\big ( |{\mathcal N}_j(\chi, k)|^2+ |{\mathcal Q}_j(\chi,k)|^2 \big )
\leq  \sum_{\substack{ \chi \shortmod q }}  \prod^R_{j=1}\big ( |{\mathcal N}_j(\chi, k)|^2+|{\mathcal Q}_j(\chi,k)|^2 \big ).
\end{split}
\end{align}

  We shall take $M$ large enough so that we may deduce from \eqref{sumoverell} that
\begin{align*}
  (2 r_k+2) \sum^R_{j=1}\frac 1{\ell_j} \leq \frac {4(r_k+1)}{\ell_R} <1 .
\end{align*}

It follows from this, \eqref{5.1}, \eqref{Pexpression} and the orthogonality relation for characters modulo $q$ that only the
  diagonal terms in the last sum of \eqref{upperboundprodofN} survive. This implies that
\begin{align}
\label{maintermbound}
\begin{split}
 & \sum_{\substack{ \chi \shortmod q }}  \prod^R_{j=1}\big ( |{\mathcal N}_j(\chi, k)|^2+ |{\mathcal Q}_j(\chi,k)|^2 \big ) \\
\le & \phis(q) \prod^R_{j=1} \Big (  \sum_{n_j} \frac{k^{2\Omega(n_j)}}{n_j w^2(n_j)}  b_j(n_j)  + \Big( \frac{12} {\ell_j}\Big)^{2r_k\ell_j}((r_k\ell_j)!)^2 \sum_{ \substack{ \Omega(n_j) = r_k\ell_j \\ p|n_j \implies  p\in P_j}} \frac{1 }{n_j w^2(n_j)} \Big ).
\end{split}
\end{align}

  Arguing as before, we see that
\begin{align}
\label{sqinN}
\begin{split}
  \sum_{n_j} \frac{k^{2\Omega(n_j)}}{n_j w^2(n_j)}  b_j(n_j)=\Big(1+ O\big(2^{-\ell_j/2} \big ) \Big)
  \exp (\sum_{p \in P_j} \frac {k^2}{p}+ O(\sum_{p \in P_j} \frac {1}{p^2})).
\end{split}
\end{align}
   Note also that,
\begin{align*}
\begin{split}
 \sum_{ \substack{ \Omega(n_j) = r_k\ell_j \\ p|n_j \implies  p\in P_j}} \frac{1 }{n_j w^2(n_j)}  \leq  \frac 1{(r_k \ell_j)!} \Big (\sum_{p \in P_j}\frac 1{p} \Big )^{r_k\ell_j}.
\end{split}
\end{align*}

  Now, we apply \eqref{Stirling} and \eqref{boundsforsumoverp} to deduce from the above that by taking $M, N$ large enough,
\begin{align}
\label{Qest}
\begin{split}
 &  \Big( \frac{12} {\ell_j}\Big)^{2r_k\ell_j}((r_k\ell_j)!)^2 \sum_{ \substack{ \Omega(n_j) = r_k\ell_j \\ p|n_j \implies  p\in P_j}} \frac{1 }{n_j w^2(n_j)} \ll r_k\ell_j\Big( \frac{144 r_k }{e\ell_j} \Big)^{r_k\ell_j}\Big (\sum_{p \in P_j}\frac 1{p} \Big )^{r_k\ell_j} \\
\ll &  r_k\ell_j\Big( \frac{144 r_k }{e\ell_j} \Big)^{r_k\ell_j}e^{r_k\ell_j \log (2\ell_j/N)}
\ll  e^{-\ell_j} \exp (\sum_{p \in P_j} \frac {k^2}{p}+ O(\sum_{p \in P_j} \frac {1}{p^2})).
\end{split}
\end{align}
  Using \eqref{sqinN} and \eqref{Qest} in \eqref{maintermbound} and then applying Lemma \ref{RS}, we readily deduce the assertion of the proposition.

\vspace*{.5cm}

\noindent{\bf Acknowledgments.} P. G. is supported in part by NSFC grant 11871082.

\bibliography{biblio}
\bibliographystyle{amsxport}

\vspace*{.5cm}

\end{document}